\documentclass[reqno,11pt]{amsart}

\usepackage[a4paper,
left=2.2cm,right=2.2cm,
top=2.3cm,bottom=2.3cm]{geometry}

\usepackage{graphicx}
\usepackage{amsmath,amssymb,amsfonts,amsthm,amscd,bm}
\usepackage{mathrsfs}
\usepackage{array}
\usepackage{enumerate}
\usepackage{xcolor}
\usepackage{textcomp}
\usepackage[colorlinks=true,linkcolor=blue,citecolor=blue,urlcolor=blue]{hyperref}
\usepackage{float}
\usepackage{subcaption}
\usepackage{longtable}
\usepackage{rotating}
\usepackage{booktabs}
\usepackage{adjustbox}
\usepackage{threeparttable}
\usepackage{multirow}
\usepackage{listings}

\usepackage[ruled,vlined]{algorithm2e}

\DeclareMathOperator{\supp}{supp}

\theoremstyle{plain}
\newtheorem{theorem}{Theorem}[section]
\newtheorem{lemma}[theorem]{Lemma}
\newtheorem{proposition}[theorem]{Proposition}
\newtheorem{corollary}[theorem]{Corollary}

\theoremstyle{definition}
\newtheorem{assumption}[theorem]{Assumption}

\theoremstyle{remark}
\newtheorem{remark}[theorem]{Remark}

\graphicspath{{figs/}}

\title[Affine Normal Directions via Log-Determinant Geometry]
{Affine Normal Directions via Log-Determinant Geometry:\\
Scalable Computation under Sparse Polynomial Structure}
\author[Y.S. Niu et al.]{Yi-Shuai Niu$^{1}$, Artan Sheshmani$^{1,3}$, and Shing-Tung Yau$^{1,2}$}

\subjclass[2020]{90C30,
90C26,
65F30,
65Y20,
53A15}

\keywords{Affine normal, affine-invariant optimization, polynomial optimization,
matrix-free methods, log-determinant, stochastic trace estimation,
Hessian--vector products, high-dimensional optimization}

\date{\today}

\begin{document}

\begin{abstract}
Affine normal directions provide intrinsic affine-invariant descent directions derived from the geometry of level sets. Their practical use, however, has long been hindered by the need to evaluate third-order derivatives and invert tangent Hessians, which becomes computationally prohibitive in high dimensions. In this paper, we show that affine normal computation admits an exact reduction to second-order structure: the classical third-order contraction term is precisely the gradient of the log-determinant of the tangent Hessian. This identity replaces explicit third-order tensor contraction by a matrix-free formulation based on tangent linear solves, Hessian--vector products, and log-determinant gradient evaluation.
Building on this reduction, we develop exact and stochastic matrix-free procedures for affine normal evaluation. For sparse polynomial objectives, the algebraic closure of derivatives further yields efficient sparse kernels for gradients, Hessian--vector products, and directional third-order contractions, leading to scalable implementations whose cost is governed by the sparsity structure of the polynomial representation. We establish end-to-end complexity bounds showing near-linear scaling with respect to the relevant sparsity scale under fixed stochastic and Krylov budgets.
Numerical experiments confirm that the proposed MF-LogDet formulation reproduces the original autodifferentiation-based affine normal direction to near machine precision, delivers substantial runtime improvements in moderate and high dimensions, and exhibits empirical near-linear scaling in both dimension and sparsity. These results provide a practical computational route for affine normal evaluation and reveal a new connection between affine differential geometry, log-determinant curvature, and large-scale structured optimization.
\end{abstract}

\maketitle
\tableofcontents

\section{Introduction}

Affine invariance has long played an important role in optimization. Classical second-order methods, most notably Newton's method, are invariant under affine changes of variables and therefore adapt naturally to local curvature. More recently, affine differential geometry has suggested a different route to affine-invariant optimization through the \emph{affine normal} direction of level sets. Unlike the Euclidean normal, namely the gradient direction, the affine normal is covariant under volume-preserving affine transformations and reflects both intrinsic affine curvature and local volume distortion of level sets. This makes it a natural geometric object for constructing optimization methods that are not tied to a particular Euclidean coordinate system.

In the work of Cheng--Cheng--Yau~\cite{cheng2005}, the affine normal was introduced into unconstrained convex optimization as a descent direction derived from level-set geometry. That work established the geometric idea of descending along affine normal directions, but did not provide a complete algorithmic framework, a convergence theory, or a computational treatment suitable for high-dimensional settings. Moreover, outside locally elliptic regions, the affine normal suffers from orientation ambiguity, and its behavior in nonconvex problems was not systematically understood.

To address these issues, our recent work~\cite{AND2026} introduced \textbf{\emph{Yau's affine normal descent} (YAND)}, a unified affine-invariant framework for smooth optimization in both convex and nonconvex regimes. In elliptic\footnote{A point $z$ is called \textbf{elliptic} if the tangent--tangent Hessian at $z$ is positive definite; It is called \textbf{hyperbolic} if that block is indefinite, and \textbf{parabolic} (or degenerate)  if it is singular.} regions, YAND agrees with the classical affine normal, while in hyperbolic and parabolic regions it resolves the orientation ambiguity by enforcing consistency with the Euclidean gradient direction. That work established convergence guarantees under several structural assumptions, including global linear convergence under strong convexity and the Polyak--{\L}ojasiewicz condition, as well as local superlinear convergence under affine second-order consistency. The present paper complements~\cite{AND2026} by addressing the main computational bottleneck left open there: the efficient evaluation of affine normal directions in high-dimensional structured problems.

The computational difficulty is already visible in the classical local formula. In coordinates aligned with the gradient, the affine normal admits the representation
\begin{equation}\label{eq:intro-AN}
d_{\mathrm{AN}}(z)\ \propto\
\begin{pmatrix}
\displaystyle f^{ij}(z)\Big(f_{n+1,j}(z)-\frac{1}{n+2}\,\|\nabla f(z)\|\,f^{pq}(z)f_{pqj}(z)\Big)\\[4pt]
-1
\end{pmatrix},
\end{equation}
where $H_T(z):=[f_{ij}(z)]_{i,j=1}^n$ is the tangent Hessian block and $[f^{ij}(z)]=H_T(z)^{-1}$. Formula~\eqref{eq:intro-AN} reveals two intrinsic computational bottlenecks: repeated tangent linear solves involving $H_T(z)$, and the evaluation of the third-order contraction
\[
a_j(z):=f^{pq}(z)f_{pqj}(z).
\]
In high dimensions, these operations are expensive, and for generic smooth objectives they offer little reusable structure from one iterate to the next. This computational burden has so far limited the practical scalability of affine-normal-based (cf. Yau-like or Yau-type \cite{AND2026}) optimization methods.

The central observation of this paper is that the apparent third-order complexity of affine normal computation can be reduced to second-order structure. More precisely, we show that
\[
f^{pq}f_{pqi}
=
\partial_{i}\log\det(H_T),
\]
so that the critical affine-normal contraction can be identified with the gradient of the log-determinant of the tangent Hessian. Although this identity follows formally from Jacobi's formula, it has substantial algorithmic consequences: it replaces explicit third-order tensor contraction by a formulation based on tangent Hessian operators, linear system solves, and log-determinant gradient evaluation. This yields a matrix-free route to affine normal computation.

This reduction becomes especially powerful in polynomial optimization. For polynomial objectives, including high-degree polynomial minimization and optimization with polynomial surrogate models, all derivatives remain within the same sparse monomial family. This algebraic closure implies that Hessian--vector products can be computed efficiently by sparse monomial contractions, without explicitly forming the Hessian matrix. As a consequence, both the tangent operator $H_Tv$ and the reduced affine-curvature term $\nabla_t \log\det(H_T)$ can be evaluated in a matrix-free manner with cost comparable to Hessian--vector multiplication. Combined with stochastic trace estimation and Krylov linear solvers, this opens the door to scalable affine normal computation for high-dimensional sparse polynomial objectives.

The main contributions of this paper are as follows.
\begin{itemize}
\item We identify the classical affine-normal contraction term with the gradient of the log-determinant of the tangent Hessian, thereby reducing affine normal computation from explicit third-order contraction to second-order structure.

\item We derive exact and stochastic matrix-free (MF-LogDet) procedures for affine normal evaluation based on tangent linear solves, log-determinant gradient evaluation, stochastic trace estimation, and directional third-order kernels, without requiring explicit construction of the tangent Hessian inverse or any full third-order tensor.

\item For sparse polynomial objectives, we develop structured sparse kernels for gradients, Hessian--vector products, and directional third-order contractions, yielding an efficient matrix-free implementation of the proposed reformulation.

\item We establish end-to-end complexity bounds showing that, in the sparse polynomial regime, the cost of affine normal evaluation is governed by matrix-free second-order primitives together with the stochastic trace and Krylov budgets, and is near-linear in the relevant sparsity scale.

\item We provide numerical evidence showing that the proposed MF-LogDet formulation reproduces the explicit autodifferentiation-based affine normal direction to near machine precision, yields substantial runtime improvements in moderate and high dimensions, and exhibits empirical near-linear scaling with respect to both dimension and sparsity.
\end{itemize}

Conceptually, these results show that affine normal computation can be transformed from a seemingly high-order geometric task into a matrix-free second-order procedure. Algorithmically, they provide a practical route for deploying affine-normal-based methods in high-dimensional sparse polynomial optimization.

While the resulting matrix-free formulation places affine-normal computation within the same complexity regime as Hessian--vector-based methods, it is important to clarify its relationship with classical Newton-type approaches. Compared with matrix-free Newton--CG, the proposed MF-LogDet affine-normal computation generally incurs an additional stochastic-trace overhead. Hence, from a purely per-iteration viewpoint, Newton--CG may remain cheaper. The significance of the present work is of a different nature: it shows that affine-normal directions, previously viewed as computationally
prohibitive due to their implicit third-order structure, can in fact be evaluated using only matrix-free second-order primitives,
up to stochastic and Krylov factors. Whether this additional geometric information translates into improved practical performance depends on the problem structure, conditioning, and outer-iteration behavior, and is therefore not the primary focus
of the present work.

The remainder of the paper is organized as follows.
Section~\ref{sec:related} reviews related work.
Section~\ref{sec:analytic} recalls the analytic affine normal formula and rewrites it in a form suitable for computation.
Section~\ref{sec:logdet} establishes the key log-determinant identity that eliminates explicit third-order contractions.
Section~\ref{sec:poly} develops sparse polynomial derivative formulas and matrix-free operator kernels.
Section~\ref{sec:stochastic} introduces stochastic trace estimation and Krylov-based linear solves for the log-determinant gradient subproblem.
Section~\ref{sec:algorithm} presents the MF-LogDet computational framework.
Section~\ref{sec:complexity} provides the resulting complexity analysis.
Section~\ref{sec:experiments} reports numerical experiments.
Finally, Section~\ref{sec:conclusion} concludes the paper.

\section{Related Work}
\label{sec:related}

This work is related to affine-invariant optimization, second-order methods, stochastic numerical linear algebra, and structured computation for polynomial objectives.

\subsection{Affine-invariant methods}

Affine-invariant optimization has been studied from geometric perspectives, particularly within affine differential geometry, which provides intrinsic notions of curvature and normal directions invariant under volume-preserving affine transformations; see, for example, \cite{nomizu1994affine,li1993affine}. The affine normal is one such object and encodes geometric information of level sets beyond the Euclidean setting.

Cheng--Cheng--Yau~\cite{cheng2005} introduced affine normal directions into convex optimization from an affine differential-geometric viewpoint, but did not address theoretical analysis, scalable computation, or nonconvex settings. Our recent work~\cite{AND2026} developed the YAND framework and established its convergence theory in both convex and nonconvex regimes. The present paper complements that work by addressing the computational question of how affine normal directions can be evaluated efficiently in high-dimensional settings.

\subsection{Second-order methods}

Classical second-order methods, most notably Newton's method, achieve affine invariance through explicit use of the Hessian; see \cite{nocedal2006numerical}. In high dimensions, however, explicit formation or inversion of the Hessian is often computationally prohibitive. This has motivated matrix-free variants, such as Newton-CG and truncated Newton methods, which rely on Hessian--vector products and Krylov subspace solvers; see \cite{dembo1982inexact,nocedal2006numerical,saad2003}.

\subsection{Natural gradient and information geometry}

Geometric invariance also plays a central role in information geometry, most notably through Amari's natural gradient method \cite{amari1998natural,amari2016information}. Natural gradient is based on an intrinsic metric on parameter space, typically induced by the Fisher information matrix. In contrast, the affine normal considered here is derived from the extrinsic geometry of level sets. The two viewpoints are therefore conceptually related, but mathematically distinct.

\subsection{Stochastic trace and log-determinant estimation}

Large-scale trace and log-determinant estimation has been extensively studied in numerical linear algebra and statistics; see, for example, \cite{girard1989trace,hutchinson1990trace,avron2011randomized,bai1996some,han2015large,ubaru2017fast}. A standard approach is to combine randomized trace estimators with Krylov subspace methods in order to evaluate quantities involving inverse operators in a matrix-free manner.

The present work uses these ideas in a different geometric setting. The key affine-normal contraction term is first rewritten as a log-determinant gradient and then evaluated through stochastic trace estimation and linear system solves. This provides a direct connection between affine differential geometry and modern randomized numerical linear algebra.

\subsection{Polynomial optimization and structured computation}

Polynomial optimization provides a natural setting for the present approach because derivatives of polynomial objectives retain the same algebraic structure; see, for example, \cite{lasserre2001global,parrilo2003semidefinite,lasserre2015introduction}. In sparse polynomial problems, this structure can often be exploited to evaluate gradients, Hessian--vector products, and related directional derivative quantities efficiently \cite{waki2006sums}.

The present work shows that, in this setting, affine normal computation can be reduced to matrix-free primitives whose cost is governed by the sparsity structure of the polynomial representation. To the best of our knowledge, this is the first work to establish such a computational reduction for affine-normal-based optimization methods.

\section{Analytic Formula and Matrix-Free Reformulation}
\label{sec:analytic}

In this section we recall the classical analytic expression
of the affine normal direction, identify its computational bottlenecks,
and reformulate it in a coordinate-free matrix-free form
that is suitable for large-scale implementation.

\subsection{Coordinate conventions}
\label{sec:coordinates}

Throughout the paper, we distinguish between ambient coordinates
and tangent coordinates.

Let $f:\mathbb{R}^{n+1}\to\mathbb{R}$ be a $C^3$ function and
let $z\in\mathbb{R}^{n+1}$ satisfy $\nabla f(z)\neq 0$.
We consider the level set
\[
M_z = \{x \in \mathbb{R}^{n+1} : f(x)=f(z)\}.
\]
Its tangent space at $z$ is
\[
T_z M_z = \{v \in \mathbb{R}^{n+1} : \nabla f(z)^\top v = 0\}.
\]

We introduce a normal-aligned orthonormal basis
\[
Q = [T \ \ \nu] \in \mathbb{R}^{(n+1)\times(n+1)},
\]
where $\nu=\nabla f(z)/\|\nabla f(z)\|$ is the unit normal direction,
and the columns of $T$ form an orthonormal basis of $T_z M_z$.

In this basis, any point $x$ in a neighborhood of $z$ is represented as
\[
y = Q^\top (x - z),
\]
where the first $n$ coordinates correspond to tangent directions
and the last coordinate corresponds to the normal direction.
We refer to this coordinate system as \emph{normal-aligned coordinates}.

In these coordinates, we adopt the shorthand notation
\[
f_i=\partial_i f,\qquad
f_{ij}=\partial_i\partial_j f,\qquad
f_{pqi}=\partial_p\partial_q\partial_i f,
\]
where indices
\[
i,j,p,q\in\{1,\dots,n\}
\]
always refer to tangent coordinates, while the index $n+1$ refers to the normal direction.
Here $\partial_i$ denotes differentiation along the $i$-th tangent coordinate direction.

Accordingly:
\begin{itemize}
\item Indices $i,j,p,q \in \{1,\dots,n\}$ correspond to tangent directions, while the index $n+1$ corresponds to the normal direction.
\item Ambient quantities such as $\nabla f(z)$ and $\nabla^2 f(z)$ are defined in $\mathbb{R}^{n+1}$. Their tangent counterparts are obtained via orthogonal projection onto the tangent space:
\[
\nabla_t f(z) = T T^\top \nabla f(z),
\qquad
H_T(z) = T^\top \nabla^2 f(z)\, T.
\]
These definitions separate ambient and intrinsic (tangent) geometry, which is essential for the affine-invariant formulation developed below.
\end{itemize}

Under this convention, all tensor contractions appearing in the affine normal formula are taken over tangent indices.

\subsection{Analytic affine normal formula in normal-aligned coordinates}
Let
\[
H_T(z):=[f_{ij}(z)]_{i,j=1}^n
\]
denote the tangent Hessian block,
and let
\[
[f^{ij}(z)] = H_T(z)^{-1}.
\]
The classical analytic affine normal direction can be written,
up to a nonzero scalar multiple, as
\begin{equation}
\label{eq:AN-analytic-sec2}
d_{\mathrm{AN}}(z)\ \propto\
\begin{pmatrix}
\displaystyle
f^{ij}(z)\Big(
f_{n+1,j}(z)
-\frac{1}{n+2}\,\|\nabla f(z)\|\, f^{pq}(z)f_{pqj}(z)
\Big)
\\[6pt]
-1
\end{pmatrix}.
\end{equation}
Throughout this paper we adopt the Einstein summation convention
over repeated tangent indices. That is, whenever an index appears twice
in a term, summation over that index from $1$ to $n$ is implied.
For example,
\[
f^{ij} f_{jk}
\;=\;
\sum_{j=1}^n f^{ij} f_{jk},
\qquad
f^{pq} f_{pqj}
\;=\;
\sum_{p,q=1}^n f^{pq} f_{pqj}.
\]
All such summations are taken over tangent indices unless otherwise specified.

Formula~\eqref{eq:AN-analytic-sec2} makes the computational structure explicit.
The first $n$ components are obtained by solving a tangent linear system,
while the right-hand side contains the third-order contraction
\[
f^{pq}(z)f_{pqj}(z),
\]
which couples the inverse tangent Hessian with third-order derivatives.
This term is the main obstacle to scalable affine normal computation.

\subsection{A log-determinant identity for the third-order contraction}
\label{sec:logdet}

We now show that the third-order contraction in
\eqref{eq:AN-analytic-sec2} is in fact a second-order quantity.
This is the key reduction underlying the computational framework of this paper.

\begin{lemma}[Log-determinant gradient identity]
\label{lem:logdet-sec2}
Assume that the tangent Hessian block $H_T(x)$ is invertible
in a neighborhood of $z$.
Then for each tangent index $i\in\{1,\dots,n\}$,
\begin{equation}
\label{eq:logdet-identity-sec2}
\partial_{i}\log\det(H_T)(z)
=
\sum_{p,q=1}^n f^{pq}(z)\,f_{pqi}(z) = \mathrm{tr}\!\left(H_T(z)^{-1}\,\partial_{i}H_T(z)\right).
\end{equation}
\end{lemma}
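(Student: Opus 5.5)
The plan is to apply Jacobi's formula for the derivative of a determinant to the matrix-valued function $x\mapsto H_T(x)$, computed in the fixed normal-aligned coordinates $y=Q^\top(x-z)$ of Section~\ref{sec:coordinates}. The point to make precise is what $H_T(x)$ means away from $z$. I will take it to be the $n\times n$ block $[f_{pq}(x)]_{p,q=1}^n$ of second derivatives along the \emph{fixed} tangent coordinate directions (the first $n$ columns of $Q$), i.e.\ $H_T(x)=T^\top\nabla^2 f(x)\,T$ with $T$ frozen at $z$. This is exactly the block whose inverse appears in \eqref{eq:AN-analytic-sec2}, and it makes $\partial_i H_T$ entrywise equal to $[f_{pqi}]$ without extra terms.

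The first step is the entrywise derivative. Since $f\in C^3$ and $T$ is constant, each entry $f_{pq}(x)$ is $C^1$, and $\partial_i f_{pq}=f_{pqi}$ because mixed partials commute. Hence $\partial_i H_T(z)=[f_{pqi}(z)]_{p,q}$.

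The second step is Jacobi's formula. On the open set where $H_T$ is invertible, $\det H_T$ is nonzero and continuous, so it has constant sign near $z$. Therefore $\log|\det H_T|$ is $C^1$ there; this is what $\log\det$ means if $H_T$ is not positive definite, as in the hyperbolic case. Jacobi's formula $\partial_i\det H_T=\det H_T\,\mathrm{tr}(H_T^{-1}\partial_iH_T)$ then gives
\[
\partial_i\log|\det H_T|=\mathrm{tr}\!\left(H_T^{-1}\partial_i H_T\right).
\]
If a self-contained derivation is wanted, the cofactor expansion gives $\partial\det/\partial a_{pq}=\mathrm{adj}(A)_{qp}=\det(A)\,(A^{-1})_{qp}$, and the chain rule finishes the computation.

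The last step expands the trace: $\mathrm{tr}(H_T^{-1}\partial_iH_T)=\sum_{p,q}(H_T^{-1})_{qp}f_{pqi}=\sum_{p,q}f^{pq}f_{pqi}$, using the symmetry of $H_T^{-1}$. Evaluating at $z$ yields \eqref{eq:logdet-identity-sec2}.

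The main obstacle I expect is conceptual rather than computational. It is making sure the "tangent Hessian" being differentiated uses frozen coordinates. If one instead let the tangent frame $T(x)$ rotate with $\nabla f(x)$, differentiating would add terms involving the derivative of $T$, i.e.\ second fundamental form contributions, and the identity with $f^{pq}f_{pqi}$ would fail. I would therefore state this convention explicitly at the start of the proof, and note that it is consistent with the coordinate definition of the indices in Section~\ref{sec:coordinates}.
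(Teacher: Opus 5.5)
Your proposal is correct and follows essentially the same route as the paper, which restricts $H_T$ to the line $z+te_i$ in the fixed normal-aligned coordinates, identifies the entries of $A'(0)$ as $f_{pqi}(z)$, and applies Jacobi's formula. Your two additions are sensible clarifications of conventions the paper leaves implicit, not a different argument: freezing the frame $T$ at $z$, and using $\log|\det H_T|$ when $H_T$ is indefinite.
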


\begin{proof}
Consider the matrix-valued function
\[
A(x):=H_T(x)=[f_{pq}(x)]_{p,q=1}^n.
\]
Fix a tangent index $i\in\{1,\dots,n\}$ and consider the curve
\[
x(t)=z+t e_i,
\]
where $e_i$ is the $i$-th coordinate vector.
Define the matrix-valued function of a scalar variable
\[
A(t):=A(x(t))=H_T(z+t e_i).
\]
Then, by the chain rule,
\[
A'(0) =
\frac{d}{dt}A(t)\Big|_{t=0}
=
\partial_{i} H_T(z).
\]
Moreover, for each entry we have
\[
[A'(0)]_{pq}
=
\frac{d}{dt}f_{pq}(z+t e_i)\Big|_{t=0}
=
\partial_{i} f_{pq}(z)
=
f_{pqi}(z).
\]
Since $A(0)=H_T(z)$ is invertible, Jacobi's formula yields
\[
\frac{d}{dt}\log\det A(t)\Big|_{t=0}
=
\mathrm{tr}\!\left(A(0)^{-1}A'(0)\right).
\]
Substituting $A(0)=H_T(z)$ and $A'(0)=\partial_{i}H_T(z)$ gives
\[
\partial_{i}\log\det(H_T)(z)
=
\mathrm{tr}\!\left(H_T(z)^{-1}\,\partial_{i}H_T(z)\right)
=
\sum_{p,q=1}^n f^{pq}(z)\,f_{pqi}(z),
\]
which is exactly \eqref{eq:logdet-identity-sec2}.
\end{proof}

\begin{remark}
Lemma~\ref{lem:logdet-sec2} is a direct consequence of the classical
Jacobi formula from matrix calculus.
Its significance here is not the identity itself,
but the fact that it removes the need to explicitly construct
the third-order tensor contraction appearing in
the affine normal formula.
In particular, it shows that the affine normal depends only on
the gradient of the local log-volume distortion
\(
\log\det(H_T)
\),
and hence is fundamentally governed by second-order affine curvature information.
\end{remark}

\subsection{Equivalent linear-system form}

Using Lemma~\ref{lem:logdet-sec2},
we introduce the vectors
\[
h\in\mathbb{R}^n,
\qquad
h_i := f_{n+1,i}(z),
\]
and
\[
a\in\mathbb{R}^n,
\qquad
a_i := \partial_{i}\log\det(H_T)(z).
\]
Then \eqref{eq:AN-analytic-sec2} can be rewritten as
\begin{equation}
\label{eq:AN-linear-system-sec2}
d_{\mathrm{AN}}(z)\ \propto\
\begin{pmatrix}
H_T(z)^{-1}\Big(h-\frac{\|\nabla f(z)\|}{n+2}\,a\Big)
\\[4pt]
-1
\end{pmatrix}.
\end{equation}
Equivalently, if we define
\[
u:=H_T(z)^{-1}\Big(h-\frac{\|\nabla f(z)\|}{n+2}\,a\Big),
\]
then $u$ is characterized by the tangent linear system
\begin{equation}
\label{eq:tangent-linear-system-sec2}
H_T(z)\,u
=
h-\frac{\|\nabla f(z)\|}{n+2}\,a.
\end{equation}
Thus, the analytic affine normal is obtained by solving
a tangent linear system with a right-hand side
built from a mixed second-order term $h$
and a log-determinant curvature term $a$.

\subsection{Coordinate-free and implementation-friendly form}

For large-scale computation it is undesirable to explicitly rotate coordinates
into a normal-aligned frame at every iterate.
We therefore reformulate the affine normal direction in a coordinate-free way. Let
\[
g:=\nabla f(z)\in\mathbb{R}^{n+1},\qquad
\nu:=\frac{g}{\|g\|}\in\mathbb{R}^{n+1}
\]
be the Euclidean gradient and the unit normal direction.
Choose any orthogonal matrix
\[
Q=[T\ \ \nu]\in\mathbb{R}^{(n+1)\times(n+1)},
\]
where
\[
T\in\mathbb{R}^{(n+1)\times n}
\]
has orthonormal columns spanning the tangent space $T_zM_z$.
Let
\[
H:=\nabla^2 f(z).
\]
Then the tangent Hessian block and the mixed second-order term can be written as
\begin{equation}
\label{eq:HT-h-coordinate-free}
H_T = T^\top H T,
\qquad
h = T^\top H \nu.
\end{equation}
The log-determinant gradient $a$ is understood as the gradient of
\(\log\det(H_T)\) in tangent coordinates. If we define
\begin{equation}
\label{eq:u-coordinate-free}
u:=H_T^{-1}\Big(h-\frac{\|g\|}{n+2}\,a\Big)\in\mathbb{R}^n,
\end{equation}
then the affine normal direction in ambient coordinates is given by
\begin{equation}
\label{eq:AN-ambient-sec2}
d_{\mathrm{AN}}^{\mathrm{amb}}(z)\ \propto\ Tu-\nu.
\end{equation}
The next proposition shows that this coordinate-free expression
is exactly equivalent to the normal-aligned formula
\eqref{eq:AN-analytic-sec2}.

\begin{proposition}[Equivalence of the aligned and ambient representations]
\label{prop:AN-equiv-sec2}
Let $f:\mathbb{R}^{n+1}\to\mathbb{R}$ be $C^3$ and let $z$ satisfy
\(\nabla f(z)\neq 0\).
Then the ambient direction \eqref{eq:AN-ambient-sec2},
with $u$ defined by \eqref{eq:u-coordinate-free},
coincides with the affine normal direction
given by \eqref{eq:AN-analytic-sec2}
up to a nonzero scalar factor.
\end{proposition}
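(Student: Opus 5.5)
The plan is to compute every quantity in the normal-aligned coordinates $y=Q^\top(x-z)$ and then map the resulting direction back to ambient coordinates through $Q$. Set $\tilde f(y):=f(z+Qy)$. By the chain rule,
\[
\nabla\tilde f(0)=Q^\top g,\qquad \nabla^2\tilde f(0)=Q^\top H Q,
\]
and the third derivatives transform in the same way, as trilinear forms evaluated on the columns of $Q$. Since $Q=[T\ \nu]$ with $T^\top g=0$, the gradient becomes $\nabla\tilde f(0)=(0,\dots,0,\|g\|)^\top$. So the frame is aligned with the normal, and formula \eqref{eq:AN-analytic-sec2} applies to $\tilde f$ at $y=0$.

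The first step identifies the blocks. For tangent indices, $\tilde f_{ij}(0)=(T^\top HT)_{ij}$, which gives $[\tilde f_{ij}]=H_T$ and $[\tilde f^{ij}]=H_T^{-1}$. For the mixed entries, $\tilde f_{n+1,j}(0)=\nu^\top H T e_j=(T^\top H\nu)_j=h_j$, using the symmetry of $H$. This matches \eqref{eq:HT-h-coordinate-free}.

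The second step handles the third-order term. Lemma~\ref{lem:logdet-sec2} applies in the $y$-coordinates, provided $H_T$ is invertible near $z$; this invertibility is implicitly required for \eqref{eq:AN-analytic-sec2} to make sense. The lemma gives $\tilde f^{pq}\tilde f_{pqj}=\partial_{y_j}\log\det[\tilde f_{pq}](0)$.

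The main obstacle is to justify that this equals the vector $a$, described as ``the gradient of $\log\det(H_T)$ in tangent coordinates.'' I would make this precise by defining $H_T(x):=T^\top\nabla^2 f(x)T$ with $T$ held fixed (the frame at $z$). Then $[\tilde f_{pq}(y)]=H_T(z+Qy)$ exactly, so $\partial_{y_j}$ at $0$ equals the directional derivative along $Te_j$. This shows $a_j=\partial_{y_j}\log\det H_T$, and the intended meaning of $a$ coincides with the aligned contraction.

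With these identifications, the tangent block of \eqref{eq:AN-analytic-sec2} is exactly the vector $u$ of \eqref{eq:u-coordinate-free}. The aligned-coordinate direction is therefore $(u,-1)^\top$. The final step maps it back. A direction $w$ in $y$-coordinates corresponds to the ambient direction $Qw$, because $x=z+Qy$. This gives $Q(u,-1)^\top=Tu-\nu$, which is \eqref{eq:AN-ambient-sec2}. The proportionality constant is carried over unchanged, and it is nonzero since the last component is $-1$.

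Finally, I would remark on the freedom in choosing $T$. Replacing $T$ by $TR$ with $R$ orthogonal sends $H_T\mapsto R^\top H_TR$, $h\mapsto R^\top h$, $a\mapsto R^\top a$, and hence $u\mapsto R^\top u$. Thus $Tu$ is unchanged, and the expression does not depend on the orthonormal tangent basis.
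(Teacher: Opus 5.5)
Your proof is correct and follows the same route as the paper's: pass to the normal-aligned coordinates $y=Q^\top(x-z)$, read off $H_T$ and $h$ from the blocks of $Q^\top HQ$, use Lemma~\ref{lem:logdet-sec2} to identify the contraction with $a$, and map $(u,-1)^\top$ back via $Q$ to obtain $Tu-\nu$. Two of your additions clarify points the paper leaves implicit: the fixed-frame definition $H_T(x):=T^\top\nabla^2 f(x)\,T$, which pins down the sense in which $a$ is ``the gradient of $\log\det H_T$ in tangent coordinates,'' and the check that $Tu$ is unchanged under $T\mapsto TR$.
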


\begin{proof}
Since $Q=[T\ \ \nu]$ is orthogonal, the coordinates
\[
y=Q^\top(x-z)
\]
define a normal-aligned coordinate system. In these coordinates, the Hessian transforms as
\[
H_y = Q^\top H Q
=
\begin{pmatrix}
T^\top H T & T^\top H \nu\\
\nu^\top H T & \nu^\top H \nu
\end{pmatrix}
=
\begin{pmatrix}
H_T & h\\
h^\top & \nu^\top H \nu
\end{pmatrix}.
\]
Hence the tangent block in the aligned formula is precisely $H_T$,
and the mixed term is exactly $h$.
By Lemma~\ref{lem:logdet-sec2}, the third-order contraction term in
\eqref{eq:AN-analytic-sec2} is the tangent log-determinant gradient $a$.
Therefore the first $n$ components of the affine normal direction
in aligned coordinates are precisely the vector $u$
defined by \eqref{eq:u-coordinate-free}. Thus, in aligned coordinates, the affine normal direction is
\[
\binom{u}{-1}.
\]
Transforming this vector back to ambient coordinates gives
\[
Q\binom{u}{-1}
=
Tu-\nu,
\]
which is exactly \eqref{eq:AN-ambient-sec2}.
\end{proof}

\subsection{Matrix-free viewpoint}

The coordinate-free formula immediately yields an implementation-friendly,
matrix-free perspective.
Indeed, affine normal computation does not require explicitly forming
the inverse of $H_T$.
It suffices to provide the tangent operator
\begin{equation}
\label{eq:matrix-free-operator-sec2}
\mathcal{H}_T(v):=T^\top\big(H(Tv)\big),
\qquad v\in\mathbb{R}^n,
\end{equation}
and solve the linear system
\[
\mathcal{H}_T(u)=b,
\qquad
b:=h-\frac{\|g\|}{n+2}\,a.
\]

This reformulation is crucial for high-dimensional settings.
Once a routine for Hessian--vector multiplication
\[
y\mapsto H\,y
\]
is available, the entire affine normal direction can be computed
without explicitly constructing either $H_T$ or $H_T^{-1}$.
This observation will serve as the basis for the stochastic
and polynomially structured algorithms developed in the following sections.

\section{Polynomial Structure and Closed-Form Derivative Kernels}
\label{sec:poly}

In this section we exploit the algebraic structure of polynomial objectives
to derive explicit formulas for derivatives of all orders
and to develop scalable matrix-free computational kernels.
These results form the computational backbone of our affine normal framework.

\subsection{Polynomial model and notation}

Let $d:=n+1$.
We consider polynomial objectives of the form
\begin{equation}
\label{eq:poly-model}
f(x)=\sum_{\ell=1}^m c_\ell\,x^{\alpha^{(\ell)}},
\qquad
x^{\alpha}:=\prod_{i=1}^d x_i^{\alpha_i},
\end{equation}
where $m$ is the number of monomials, $c_\ell\in\mathbb{R}$, and
$\alpha^{(\ell)}\in\mathbb{N}^d$ are multi-indices.
We denote the support of each monomial by
\[
\supp(\alpha):=\{i:\alpha_i>0\}.
\]
We further define the average support size by
\[
s:=\frac{1}{m}\sum_{\ell=1}^m |\supp(\alpha^{(\ell)})|.
\]
To describe derivatives compactly,
we introduce the falling factorial notation
\[
(r)_k := r(r-1)\cdots(r-k+1),\qquad (r)_0=1,
\]
and for multi-indices $\beta\in\mathbb{N}^d$, we define
\[
(\alpha)_\beta := \prod_{i=1}^d (\alpha_i)_{\beta_i},\qquad
\partial^\beta
:=
\frac{\partial^{|\beta|}}{\partial x_1^{\beta_1}\cdots \partial x_d^{\beta_d}},
\qquad
|\beta| := \sum_{i=1}^d \beta_i.
\]

Throughout this section, all derivatives are taken with respect to the ambient coordinates $x \in \mathbb{R}^{n+1}$. In particular, $\partial_i$ and $\partial^\beta$ denote standard partial derivatives in the ambient space. The tangent quantities appearing in the affine normal construction are subsequently obtained via projection or restriction of these ambient derivatives onto the tangent space.

\subsection{Closure of polynomial derivatives}

A fundamental property of polynomial functions
is that derivatives of any order remain within the same monomial family.

\begin{lemma}[Algebraic closure of polynomial derivatives]
\label{lem:poly-closure}
Let $\alpha,\beta\in\mathbb{N}^d$.
Then
\[
\partial^\beta x^\alpha
=
\begin{cases}
(\alpha)_\beta\,x^{\alpha-\beta}, & \beta\le\alpha,\\
0, & \text{otherwise}.
\end{cases}
\]
Consequently, for any polynomial $f$ of the form \eqref{eq:poly-model},
all derivatives $\partial^\beta f$ remain sparse linear combinations
of monomials.
\end{lemma}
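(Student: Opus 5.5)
The plan is to reduce to the one-variable case using the product structure of the monomial. Since $x^\alpha=\prod_{i=1}^d x_i^{\alpha_i}$, each factor depends on a single coordinate. Likewise $\partial^\beta=\partial_1^{\beta_1}\cdots\partial_d^{\beta_d}$ is a composition of commuting partial derivatives, each acting in its own variable. Hence $\partial_i^{\beta_i}$ acts only on the factor $x_i^{\alpha_i}$, and the other factors pass through as constants. This gives
\[
\partial^\beta x^\alpha=\prod_{i=1}^d \frac{d^{\beta_i}}{dx_i^{\beta_i}}\,x_i^{\alpha_i},
\]
so the whole statement reduces to a univariate formula for $\frac{d^k}{dt^k}t^r$ with $r,k\in\mathbb{N}$.

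For the univariate step, I would prove by induction on $k$ that
\[
\frac{d^k}{dt^k}t^r=(r)_k\,t^{r-k}\quad\text{for } k\le r,
\]
and that this derivative vanishes for $k>r$.
\begin{itemize}
\item The base case $k=0$ holds by the convention $(r)_0=1$.
\item For the inductive step, differentiate $(r)_k t^{r-k}$ once to obtain $(r)_k(r-k)\,t^{r-k-1}=(r)_{k+1}t^{r-(k+1)}$.
\item At $k=r$ the result is the constant $r!$, so every further derivative is zero. Equivalently, $(r)_k=0$ once $k>r$, because the product then contains the factor $0$.
\end{itemize}

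Multiplying the univariate results over $i$ gives $\prod_i(\alpha_i)_{\beta_i}\,x_i^{\alpha_i-\beta_i}=(\alpha)_\beta\,x^{\alpha-\beta}$ when $\beta\le\alpha$ componentwise. If instead $\beta_i>\alpha_i$ for some $i$, that factor vanishes and so does the whole product, which is the second case of the statement.

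The consequence for a polynomial $f$ of the form \eqref{eq:poly-model} then follows by linearity of $\partial^\beta$:
\[
\partial^\beta f=\sum_{\ell:\ \beta\le\alpha^{(\ell)}} c_\ell\,(\alpha^{(\ell)})_\beta\,x^{\alpha^{(\ell)}-\beta}.
\]
This is a linear combination of at most $m$ monomials. Each exponent $\alpha^{(\ell)}-\beta$ has support contained in $\supp(\alpha^{(\ell)})$, so both the number of terms and the per-monomial support size are preserved, which is the sense of sparsity intended.

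There is no real obstacle here. The only points needing care are the bookkeeping at the boundary $k=r$ versus $k>r$ and making explicit that the terms with $\beta\not\le\alpha^{(\ell)}$ drop out.
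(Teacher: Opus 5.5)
Your proposal is correct and follows the same route as the paper: factor $\partial^\beta x^\alpha$ into the univariate derivatives $\frac{d^{\beta_i}}{dx_i^{\beta_i}}x_i^{\alpha_i}$, apply the one-variable identity $\frac{d^k}{dt^k}t^r=(r)_k\,t^{r-k}$ (with vanishing for $k>r$), and multiply the factors. You go slightly further than the paper by proving the univariate identity by induction and by writing out the linearity argument for the ``consequently'' clause, both of which the paper leaves implicit.
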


\begin{proof}
For a multi-index $\beta\in\mathbb{N}^d$, we have
\[
\partial^\beta x^\alpha
=
\prod_{i=1}^d \frac{\partial^{\beta_i}}{\partial x_i^{\beta_i}} x_i^{\alpha_i}.
\]
By the one-dimensional identity
\[
\frac{d^k}{dx^k}x^r = (r)_k x^{r-k},
\]
each factor evaluates to
\[
\frac{\partial^{\beta_i}}{\partial x_i^{\beta_i}} x_i^{\alpha_i}
=
\begin{cases}
(\alpha_i)_{\beta_i}\,x_i^{\alpha_i-\beta_i}, & \beta_i\le \alpha_i,\\
0, & \text{otherwise}.
\end{cases}
\]
Taking the product over $i=1,\dots,d$ yields the result.
\end{proof}

\begin{remark}
Lemma~\ref{lem:poly-closure} shows that polynomial derivatives are closed under differentiation and preserve sparsity at the level of monomial supports.
As a consequence, all derivative operators involved in affine normal computation
can be implemented via sparse monomial contractions,
without introducing new functional structures.
This property is the key reason why polynomial objectives
admit scalable implementations.
\end{remark}

\subsection{Gradient and Hessian formulas}

Let $t_\ell(x):=c_\ell x^{\alpha^{(\ell)}}$.
Using Lemma~\ref{lem:poly-closure}, we obtain explicit expressions.

\paragraph{Gradient.}
For each coordinate $i$,
\[
\partial_{i} t_\ell(x)
=
\begin{cases}
c_\ell\,\alpha_{\ell i}\,x^{\alpha^{(\ell)}-e_i}, & \alpha_{\ell i}>0,\\
0, & \text{otherwise}.
\end{cases}
\]

\paragraph{Hessian.}
For $i,j\in\{1,\dots,d\}$,
\[
\partial_{i}\partial_{j} t_\ell(x)
=
\begin{cases}
c_\ell\,\alpha_{\ell i}(\alpha_{\ell i}-1)\,x^{\alpha^{(\ell)}-2e_i}, & i=j,\\
c_\ell\,\alpha_{\ell i}\alpha_{\ell j}\,x^{\alpha^{(\ell)}-e_i-e_j}, & i\neq j,
\end{cases}
\]
with the convention that terms with negative exponents are interpreted as zero.

\subsection{Efficient Hessian--vector product}

The key computational primitive in our framework
is the Hessian--vector product
\[
w = \nabla^2 f(x)\,v.
\]
We now derive a form suitable for sparse implementation.

For a single monomial $t(x)=c x^\alpha$,
we first observe that its gradient admits the representation
\[
\nabla t(x)
=
t(x)\,\Big(\frac{\alpha_1}{x_1},\dots,\frac{\alpha_d}{x_d}\Big).
\]
Thus, for any direction $v\in\mathbb{R}^d$,
\[
\nabla t(x)^\top v
=
t(x)\sum_{j\in\supp(\alpha)} \alpha_j \frac{v_j}{x_j}.
\]
This motivates the definition
\[
\beta(x;v):=\sum_{j\in\supp(\alpha)} \alpha_j\,\frac{v_j}{x_j},
\]
so that
\[
\nabla t(x)^\top v = t(x)\,\beta(x;v).
\]
Differentiating once more yields
\[
\nabla^2 t(x)\,v
=
\nabla\big(t(x)\,\beta(x;v)\big).
\]
Applying the product rule, we obtain
\[
\nabla^2 t(x)\,v
=
(\nabla t(x))\,\beta(x;v)
+
t(x)\,\nabla \beta(x;v).
\]
A direct computation shows that
\[
\partial_{i} \beta(x;v)
=
-\alpha_i\,\frac{v_i}{x_i^2},
\qquad i\in\supp(\alpha).
\]
Substituting the above expressions yields the component-wise formula
\begin{equation}
\label{eq:poly-Hv}
(\nabla^2 t(x)\,v)_i
=
t(x)\Big(\frac{\alpha_i}{x_i}\Big)\beta(x;v)
-
t(x)\Big(\frac{\alpha_i}{x_i^2}\Big)v_i,
\qquad i\in\supp(\alpha).
\end{equation}

\begin{proposition}[Matrix-free Hessian--vector kernel]
\label{prop:Hv-kernel}
For a polynomial of the form \eqref{eq:poly-model},
the Hessian--vector product $w=\nabla^2 f(x)\,v$ can be computed in
\[
O\!\Big(\sum_{\ell=1}^m |\supp(\alpha^{(\ell)})|\Big)
\]
operations using the update formula \eqref{eq:poly-Hv}.
\end{proposition}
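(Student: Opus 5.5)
By linearity, $\nabla^2 f(x)\,v=\sum_{\ell=1}^m \nabla^2 t_\ell(x)\,v$. The plan is therefore to accumulate the contributions of the individual monomials into an output vector $w\in\mathbb{R}^d$, initialized to zero, and to show that each monomial $t_\ell$ is processed in $O(|\supp(\alpha^{(\ell)})|)$ operations. Summing over $\ell$ then gives the claimed bound. The zero initialization of $w$ costs $O(d)$. I will assume, as is implicit in the statement, that $d=O(\sum_\ell|\supp(\alpha^{(\ell)})|)$ or that $w$ is kept in sparse form; otherwise this cost is simply added to the bound.

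For a fixed monomial $t(x)=c\,x^\alpha$ with support $S=\supp(\alpha)$, the work proceeds in three stages.
\begin{enumerate}
\item Evaluate $t(x)=c\prod_{j\in S}x_j^{\alpha_j}$. This costs $O(|S|)$ multiplications, treating the degree as bounded, or as $O(\log\alpha_j)$ via repeated squaring.
\item Evaluate the scalar $\beta(x;v)=\sum_{j\in S}\alpha_j v_j/x_j$ in $O(|S|)$ operations.
\item For each $i\in S$, add $t(x)\big(\tfrac{\alpha_i}{x_i}\beta(x;v)-\tfrac{\alpha_i}{x_i^2}v_i\big)$ to $w_i$, using \eqref{eq:poly-Hv}. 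Each update costs $O(1)$, so this stage is $O(|S|)$ overall.
\end{enumerate}
For $i\notin S$ the monomial does not depend on $x_i$, so $(\nabla^2 t(x)v)_i=0$ and no work is needed. Correctness of \eqref{eq:poly-Hv} follows from the product-rule derivation preceding the proposition. I would double-check that derivation against the explicit Hessian entries given by Lemma~\ref{lem:poly-closure}: the contraction $\sum_j\partial_i\partial_j t\,v_j$ equals $t\,\frac{\alpha_i}{x_i}\sum_j\frac{\alpha_j v_j}{x_j}-t\frac{\alpha_i}{x_i^2}v_i$, because the diagonal entry is $t\,\alpha_i(\alpha_i-1)/x_i^2$.

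The main obstacle is that \eqref{eq:poly-Hv} divides by $x_i$, so the derivation is only valid when $x_i\neq0$ for all $i\in S$. On the set where some coordinate vanishes, a separate argument is needed for both correctness and complexity. My plan is to avoid division altogether.

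For each monomial, I would compute the leave-one-out products $P_{\setminus i}=\prod_{j\in S,\,j\ne i}x_j^{\alpha_j}$ and the reduced powers $x_i^{\alpha_i-1}$ and $x_i^{\alpha_i-2}$. The leave-one-out products are obtained from prefix and suffix products over $S$, in $O(|S|)$ operations. The identities $t\,\alpha_i/x_i=c\,\alpha_i x_i^{\alpha_i-1}P_{\setminus i}$ and $\alpha_i/x_i^2\cdot t=c\,\alpha_i x_i^{\alpha_i-2}P_{\setminus i}$ hold as polynomials. The troublesome product $t\,\frac{\alpha_i}{x_i}\beta$ mixes pairs $i\neq j$, and the terms with $j=i$ and $j\ne i$ are handled separately:
\begin{itemize}
\item the $j=i$ term is $c\,\alpha_i^2x_i^{\alpha_i-2}P_{\setminus i}v_i$;
\item the $j\neq i$ terms equal $c\,\alpha_i x_i^{\alpha_i-1}\gamma_{\setminus i}$, where $\gamma_{\setminus i}=\sum_{j\neq i}\alpha_j v_j x_j^{\alpha_j-1}\prod_{k\ne i,j}x_k^{\alpha_k}$.
\end{itemize}
Here $\gamma_{\setminus i}$ is the directional derivative of $P_{\setminus i}$ along $v$. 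It can be obtained for all $i$ simultaneously in $O(|S|)$ operations by propagating prefix and suffix pairs (value, derivative) with the product rule, which is forward-mode differentiation over the prefix and suffix products. Combining the two parts recovers exactly the Hessian entries given by Lemma~\ref{lem:poly-closure}, without any division. This yields the same $O(|S|)$ per-monomial cost at every $x$, which completes the bound.
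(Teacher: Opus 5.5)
Your core argument is the same as the paper's. You split $\nabla^2 f(x)v$ into monomial contributions, evaluate $t_\ell(x)$ and $\beta(x;v)$ in $O(|\supp(\alpha^{(\ell)})|)$ operations, and apply \eqref{eq:poly-Hv} with $O(1)$ work per support index. Where you differ is in what you add on top, and the additions amount to a genuinely more general argument.

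The paper's proof silently assumes $x_i\neq 0$ for $i\in\supp(\alpha^{(\ell)})$. The formula divides by $x_i$, and that assumption is stated explicitly only for the third-order kernel. The paper also treats powers as $O(1)$ and ignores the $O(d)$ cost of a dense output. Your prefix/suffix forward-mode computation of $P_{\setminus i}$ and $\gamma_{\setminus i}=\nabla P_{\setminus i}\cdot v$ removes all division. This makes the $O(|S|)$ per-monomial bound valid at every $x$, including on the coordinate hyperplanes where \eqref{eq:poly-Hv} is undefined. The cost is extra bookkeeping, and the method is no longer literally ``the update formula \eqref{eq:poly-Hv}''. The paper's route is shorter but covers only points with nonvanishing support coordinates.

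One slip needs fixing. The identity $t\,\alpha_i/x_i^2=c\,\alpha_i x_i^{\alpha_i-2}P_{\setminus i}$ is not a polynomial identity when $\alpha_i=1$. Nor is your $j=i$ piece $c\,\alpha_i^2x_i^{\alpha_i-2}P_{\setminus i}v_i$. You must combine these two terms symbolically into $c\,\alpha_i(\alpha_i-1)x_i^{\alpha_i-2}P_{\setminus i}v_i$ before evaluating, and drop that term entirely when $\alpha_i=1$. Otherwise you reintroduce $x_i^{-1}$ at exactly the points you set out to handle. With that combination made explicit, your final formula matches the Hessian entries from Lemma~\ref{lem:poly-closure} and the argument is complete.
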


\begin{proof}
By linearity, we have
\[
\nabla^2 f(x)\,v
=
\sum_{\ell=1}^m \nabla^2 t_\ell(x)\,v,
\quad
t_\ell(x)=c_\ell x^{\alpha^{(\ell)}}.
\]
For each monomial $t_\ell$, the quantities $t_\ell(x)$ and $\beta(x;v)$
can be evaluated in $O(|\supp(\alpha^{(\ell)})|)$ operations.
The update \eqref{eq:poly-Hv} affects only indices
$i\in\supp(\alpha^{(\ell)})$, and each update requires $O(1)$ work.
Therefore, the total cost per monomial is $O(|\supp(\alpha^{(\ell)})|)$.
Summing over $\ell=1,\dots,m$ yields the result.
\end{proof}

\begin{remark}
The computational cost depends only on the total sparsity
\[
\sum_{\ell=1}^m |\supp(\alpha^{(\ell)})|,
\]
and is independent of the ambient dimension $d$.
In contrast, explicit dense Hessian methods require $O(d^2)$ operations.
\end{remark}

\subsection{Third-order contraction via directional derivatives}
\label{sec:third-order-kernel}

Affine normal computation requires evaluating expressions of the form
\[
D^3 f(x)[u,v,\cdot],
\]
where $D^3 f(x)$ denotes the third derivative of $f$ viewed as a symmetric trilinear form, and
$D^3 f(x)[u,v,\cdot]\in\mathbb{R}^d$ denotes the vector obtained by contracting the first two slots with $u$ and $v$.
Equivalently, its $k$-th component is
\[
\bigl(D^3 f(x)[u,v,\cdot]\bigr)_k
=
\sum_{i,j=1}^d \partial_{ijk}f(x)\,u_i v_j.
\]
Here $\partial_{ijk}f(x)$ denotes the ambient third-order partial derivative in the standard Euclidean coordinates of $\mathbb{R}^d$, rather than the tangent-coordinate notation used in Section~\ref{sec:analytic}.
Such quantities arise naturally in the stochastic trace formulation
introduced in Section~\ref{sec:stochastic}.
Although the analytic affine-normal formula in Section~\ref{sec:analytic}
involves only tangent-index third-order terms such as $f_{pqj}$,
it is computationally more convenient to work first with the ambient vector
$D^3 f(x)[u,v,\cdot]$ and then project it onto the tangent space.
Indeed, the quantity used in the affine-normal computation is
\[
T^\top D^3 f(x)[Tu,Tv,\cdot],
\]
which is exactly the tangent component corresponding to the
third-order contractions appearing in the normal-aligned formula.
This ambient-to-tangent formulation is what makes the subsequent
matrix-free polynomial kernels straightforward to derive and implement.

Instead of explicitly forming third-order tensors,
we use the identity
\[
D^3 f(x)[u,v,\cdot]
=
\nabla\big(u^\top (\nabla^2 f(x)) v\big).
\]

To obtain an explicit sparse kernel, we first pass from the full polynomial
$f$ to a single monomial. This is sufficient because
$D^3 f(x)[u,v,\cdot]$ depends linearly on $f$, so the polynomial case is
recovered by summing the monomial contributions.
We therefore first derive a matrix-free representation at the level of a single monomial.
Let $t(x)=c x^\alpha$ and assume $x_i\neq 0$ for all $i\in\supp(\alpha)$.
Define
\[
A_u := \sum_{i\in\supp(\alpha)} \alpha_i\,\frac{u_i}{x_i},
\qquad
A_v := \sum_{i\in\supp(\alpha)} \alpha_i\,\frac{v_i}{x_i},\qquad
C := \sum_{i\in\supp(\alpha)} \alpha_i\,\frac{u_i v_i}{x_i^2}.
\]
Using the Hessian--vector representation from the previous subsection, we first express the scalar contraction
\[
u^\top (\nabla^2 t(x)) v
=
t(x)\big(A_u A_v - C\big).
\]
We then differentiate this scalar quantity with respect to $x_k$ in order to recover the
vector $D^3 t(x)[u,v,\cdot]$. Thus
\begin{equation}
\label{eq:d3-kernel}
(\nabla^3 t(x)[u,v,\cdot])_k
=
t(x)\,\alpha_k\Big(
\frac{A_u A_v - C}{x_k}
-\frac{u_k A_v + v_k A_u}{x_k^2}
+\frac{2u_k v_k}{x_k^3}
\Big),
\qquad k\in\supp(\alpha).
\end{equation}

\begin{proposition}[Matrix-free evaluation of third-order contractions]
\label{prop:d3-kernel}
For a polynomial of the form \eqref{eq:poly-model},
the vector $D^3 f(x)[u,v,\cdot]$ can be evaluated in
\[
O\!\Big(\sum_{\ell=1}^m |\supp(\alpha^{(\ell)})|\Big)
= O(ms)
\]
operations using the representation \eqref{eq:d3-kernel}.
\end{proposition}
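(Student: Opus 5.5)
The proof will mirror that of Proposition~\ref{prop:Hv-kernel}, in three steps: reduce to monomials, verify the per-monomial formula \eqref{eq:d3-kernel}, and count operations. First, since $f\mapsto D^3 f(x)[u,v,\cdot]$ is linear, we have
\[
D^3 f(x)[u,v,\cdot]=\sum_{\ell=1}^m D^3 t_\ell(x)[u,v,\cdot],\qquad t_\ell(x)=c_\ell x^{\alpha^{(\ell)}}.
\]
It therefore suffices to show two things for each monomial. Its contribution is supported on $\supp(\alpha^{(\ell)})$, and it can be evaluated and accumulated in $O(|\supp(\alpha^{(\ell)})|)$ operations. The bound $\sum_\ell|\supp(\alpha^{(\ell)})|=ms$ is then just the definition of $s$.

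For a single monomial $t=cx^\alpha$ with $x_i\neq 0$ on the support, I will first confirm $u^\top\nabla^2 t(x)\,v=t(A_uA_v-C)$. This follows by taking the inner product of \eqref{eq:poly-Hv} with $u$. Next, I differentiate in $x_k$, using $\partial_k t=t\alpha_k/x_k$, $\partial_k A_u=-\alpha_k u_k/x_k^2$, $\partial_k A_v=-\alpha_k v_k/x_k^2$, and $\partial_k C=-2\alpha_k u_kv_k/x_k^3$. The product rule then gives exactly \eqref{eq:d3-kernel}. For $k\notin\supp(\alpha)$, $t$ does not depend on $x_k$, so the component vanishes.

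The cost count goes as follows. Evaluating $t(x)$, $A_u$, $A_v$ and $C$ each takes one pass over the support, i.e.\ $O(|\supp\alpha|)$ work, with the powers $x_i^{\alpha_i}$ computed in $O(1)$ under the standard assumption of bounded degree or constant-time exponentiation. After that, each component in \eqref{eq:d3-kernel} costs $O(1)$ and touches only indices in the support. Summing over $\ell$ gives the stated bound, and no $d$-dependent work appears beyond allocating and initializing the output vector.

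The main obstacle is the division by $x_i$. The formula \eqref{eq:d3-kernel} is invalid when some $x_i=0$ on the support, yet the proposition claims the bound unconditionally. I would handle this by noting that $D^3 t(x)[u,v,\cdot]$ is a polynomial identity in $x$. Concretely, I will expand the closed form directly via Lemma~\ref{lem:poly-closure}: with $p=x^{\alpha}$ understood through products over the support, the quantities $t\alpha_k/x_k$, $t\alpha_k/x_k^2$ and $t\alpha_k/x_k^3$ equal $c(\alpha)_{\beta}x^{\alpha-\beta}$ for $\beta=e_k,2e_k,3e_k$. These are computable without division by means of prefix/suffix products over the support indices, still in $O(|\supp\alpha|)$ time. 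Failing that, I will state the result under the nonvanishing assumption already imposed before \eqref{eq:d3-kernel} and extend to the zero case by continuity.
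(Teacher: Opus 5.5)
Your core argument is the paper's proof: linearity over monomials, evaluation of $t_\ell(x)$, $A_u^{(\ell)}$, $A_v^{(\ell)}$, $C^{(\ell)}$ in $O(|\supp(\alpha^{(\ell)})|)$ operations, an $O(1)$ update of \eqref{eq:d3-kernel} at each support index, and $\sum_\ell|\supp(\alpha^{(\ell)})|=ms$. Your check of \eqref{eq:d3-kernel} via $u^\top\nabla^2 t\,v=t(A_uA_v-C)$ and the product rule reproduces the derivation the paper gives just before the proposition. Like the paper, this proves the bound only under the standing assumption $x_i\neq 0$ on each support, which is what ``using the representation \eqref{eq:d3-kernel}'' delivers. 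So the first half of your fallback is exactly what the paper establishes.

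The division-free repair in your last paragraph would fail as written. The identity $t\alpha_k/x_k^2=c(\alpha)_{2e_k}x^{\alpha-2e_k}$ is false: since $(\alpha)_{2e_k}=\alpha_k(\alpha_k-1)$, the right-hand side is $(\alpha_k-1)$ times the left. The same problem occurs for $3e_k$, with factor $(\alpha_k-1)(\alpha_k-2)$.

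More fundamentally, the individual pieces of \eqref{eq:d3-kernel} are not polynomials. For $\alpha_k=1$ the factors $t/x_k^2$ and $t/x_k^3$ contain $x_k^{-1}$ and $x_k^{-2}$. Likewise $A_u$, $A_v$, $C$ contain $x_i^{-1}$ or $x_i^{-2}$ for every support index $i$. The singularities cancel only after combination; for example, the diagonal part of $t(A_uA_v-C)$ is $\sum_i c\,\alpha_i(\alpha_i-1)u_iv_i\,x^{\alpha-2e_i}$. So \eqref{eq:d3-kernel} cannot be evaluated term by term without division. Appealing to continuity identifies the value at such $x$ but gives no $O(|\supp\alpha|)$ procedure for computing it.

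If you want the unconditional claim, you need a different representation. One option is to work in $\mathbb{R}[\epsilon,\delta]/(\epsilon^2,\delta^2)$ with factors $F_i=\phi_i+\epsilon u_i\phi_i'+\delta v_i\phi_i'+\epsilon\delta\,u_iv_i\phi_i''$, where $\phi_i(s)=s^{\alpha_i}$ is evaluated at $x_i$. All $\phi_i^{(j)}$ are division-free by Lemma~\ref{lem:poly-closure}.
\begin{itemize}
\item $u^\top\nabla^2 t\,v$ is then the $\epsilon\delta$-coefficient of $c\prod_i F_i$.
\item Its $k$-th partial derivative is the $\epsilon\delta$-coefficient of $c\,(\prod_{i\neq k}F_i)\,\partial_{x_k}F_k$, where $\partial_{x_k}F_k=\phi_k'+\epsilon u_k\phi_k''+\delta v_k\phi_k''+\epsilon\delta\,u_kv_k\phi_k'''$.
\item Prefix and suffix products in this four-dimensional algebra give all components in $O(|\supp\alpha|)$ operations.
\end{itemize}
This goes beyond the paper, whose proof simply relies on \eqref{eq:d3-kernel} under the nonvanishing assumption.
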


\begin{proof}
By linearity, it suffices to estimate the cost of evaluating
the contribution of a single monomial.
Consider
\[
t_\ell(x)=c_\ell x^{\alpha^{(\ell)}}, \qquad S_\ell:=\supp(\alpha^{(\ell)}).
\]
The quantities
\[
A_u^{(\ell)}=\sum_{i\in S_\ell}\alpha_i^{(\ell)}\frac{u_i}{x_i},
\qquad
A_v^{(\ell)}=\sum_{i\in S_\ell}\alpha_i^{(\ell)}\frac{v_i}{x_i},
\qquad
C^{(\ell)}=\sum_{i\in S_\ell}\alpha_i^{(\ell)}\frac{u_i v_i}{x_i^2},
\]
as well as the monomial value $t_\ell(x)$,
can each be evaluated in $O(|S_\ell|)$ operations.
The update \eqref{eq:d3-kernel} affects only indices $k\in S_\ell$,
and each update requires $O(1)$ work.
Hence the total cost per monomial is $O(|S_\ell|)$.
Summing over $\ell=1,\dots,m$ yields
\[
O\!\Big(\sum_{\ell=1}^m |S_\ell|\Big)
=
O\!\Big(\sum_{\ell=1}^m |\supp(\alpha^{(\ell)})|\Big).
\]
Recalling that
\[
s:=\frac{1}{m}\sum_{\ell=1}^m |\supp(\alpha^{(\ell)})|,
\]
we have
\[
\sum_{\ell=1}^m |\supp(\alpha^{(\ell)})| = ms.
\]
Therefore the total cost is
\[
O(ms).
\]
\end{proof}

\begin{remark}
This result shows that the directional third-order information required
by affine normal computation can be evaluated at essentially the same
computational cost as Hessian--vector products, up to constant factors.
This constitutes a significant reduction: while naive third-order tensor
operations scale as $O(d^3)$, the structured polynomial representation
admits a fully matrix-free implementation with linear complexity in the sparsity.
Such a reduction critically relies on the multiplicative structure of monomials
and does not extend to general smooth functions.
\end{remark}

The analysis above shows that the cost of estimating
the log-determinant gradient is dominated by repeated
Hessian--vector products and Krylov solves.
For polynomial objectives, this yields a matrix-free implementation
whose full end-to-end complexity is established in Section~\ref{sec:complexity}.

\section{Stochastic Trace Estimation and Krylov Solvers}
\label{sec:stochastic}

In this section we develop a scalable matrix-free strategy
for computing the log-determinant gradient term
\[
a=\nabla_t \log\det(H_T),
\]
which is the only remaining nontrivial component
in the affine normal direction. For related large-scale approaches to trace and log-determinant estimation, see, for example,
\cite{bai1996some,han2015large,ubaru2017fast}.

\subsection{Trace formulation of the log-determinant gradient}

From Lemma~\ref{lem:logdet-sec2},
we have
\[
a_i = \partial_{i} \log\det(H_T)
= \mathrm{tr}\!\big(H_T^{-1}\,\partial_{i}H_T\big).
\]
Define
\[
M_i := H_T^{-1}\,\partial_{i}H_T.
\]
Then
\begin{equation}
\label{eq:trace-form}
a_i = \mathrm{tr}(M_i).
\end{equation}

This representation eliminates explicit third-order tensor construction
and reduces the problem to trace estimation of matrix products.

\subsection{Hutchinson stochastic trace estimator}

To estimate \eqref{eq:trace-form}, we use the Hutchinson trace estimator \cite{girard1989trace,hutchinson1990trace}.
Let $\xi\in\mathbb{R}^n$ be a random vector with independent
Rademacher entries ($\pm1$ with equal probability).
Then for any matrix $M$,
\[
\mathbb{E}[\xi^\top M \xi] = \mathrm{tr}(M).
\]
Using $q$ independent samples $\{\xi^{(\ell)}\}_{\ell=1}^q$,
we approximate
\begin{equation}
\label{eq:hutch}
a_i \approx \frac{1}{q}\sum_{\ell=1}^q
(\xi^{(\ell)})^\top M_i \xi^{(\ell)}.
\end{equation}

The estimator is unbiased, and its variance is governed by the off-diagonal structure of $M_i$; see, for instance, \cite{avron2011randomized}.
In practice, small-to-moderate values of $q$ (e.g., $5$--$20$) often provide a useful accuracy--cost tradeoff when only sufficiently accurate search directions are needed.

\subsection{Reduction to linear systems and third-order contractions}

Substituting $M_i=H_T^{-1}\partial_{i}H_T$ into \eqref{eq:hutch},
we obtain
\[
(\xi^\top M_i \xi)
=
\xi^\top H_T^{-1}(\partial_{i}H_T)\xi.
\]

Let
\[
y := H_T^{-1}\xi,
\]
which is obtained by solving the linear system
\[
H_T y = \xi.
\]
Then
\begin{equation}
\label{eq:bilinear}
\xi^\top M_i \xi
=
y^\top (\partial_{i}H_T)\xi.
\end{equation}
Thus each stochastic probe requires:
\begin{itemize}
\item one linear system solve,
\item one third-order directional contraction.
\end{itemize}
Using the identity
\[
D^3 f(x)[u,v,\cdot]
=
\nabla\big(u^\top (\nabla^2 f(x)) v\big),
\]
the action of $\partial_i H_T$ on $\xi$ is recovered from the
ambient third-order directional derivative together with tangent projection.
More precisely, for tangent vectors represented in ambient coordinates,
the vector whose $i$-th component is $y^\top (\partial_i H_T)\xi$
corresponds to
\[
T^\top D^3 f(x)[Ty,T\xi,\cdot],
\]
which is exactly the type of third-order contraction treated in
Section~\ref{sec:poly}. Hence the required quantity can be evaluated
via the matrix-free polynomial kernels developed there.

\subsection{Krylov methods for linear system solves}

The system
\[
H_T y = \xi
\]
is not formed explicitly.
Instead, it is solved by Krylov subspace methods \cite{hestenes1952cg,paige1975minres,saad1986gmres,saad2003}.

\paragraph{Symmetric positive definite case.}
When $H_T\succ 0$ (the elliptic region), we use the conjugate gradient (CG) method \cite{hestenes1952cg}.
Let $y_k$ denote the $k$-th CG iterate.
Starting from an initial guess $y_0$, the residual and search direction are initialized by
\[
r_0=\xi-H_T y_0,
\qquad
p_0=r_0,
\]
and the iteration takes the form
\[
\alpha_k=\frac{r_k^\top r_k}{p_k^\top H_T p_k},
\qquad
y_{k+1}=y_k+\alpha_k p_k,
\qquad
r_{k+1}=r_k-\alpha_k H_T p_k,
\]
\[
\beta_{k+1}=\frac{r_{k+1}^\top r_{k+1}}{r_k^\top r_k},
\qquad
p_{k+1}=r_{k+1}+\beta_{k+1}p_k.
\]
Then the error satisfies
\begin{equation}
\label{eq:cg-rate}
\|y-y_k\|_{H_T}
\le
2\Big(\frac{\sqrt{\kappa}-1}{\sqrt{\kappa}+1}\Big)^k
\|y\|_{H_T},
\end{equation}
where $\kappa$ is the condition number of $H_T$.

\paragraph{Indefinite case.}
When $H_T$ is indefinite, we use MINRES \cite{paige1975minres} or GMRES \cite{saad1986gmres}, or introduce a regularization
\[
H_T+\lambda I
\]
to ensure numerical stability.
These methods generate iterates in the Krylov subspace
\[
\mathcal{K}_k(H_T,r_0)
:=
\mathrm{span}\{r_0,H_T r_0,\dots,H_T^{k-1}r_0\},
\]
with $r_0=\xi-H_T y_0$, and differ mainly in the residual minimization principle used within this subspace.

\begin{remark}
The matrix-free formulation requires only Hessian--vector products, which can be implemented efficiently for polynomial objectives.
At the Krylov level, no explicit assembly of the ambient Hessian or of any third-order tensor is required.
\end{remark}

\subsection{Error decomposition}

The total error in estimating $a_i$ arises from two sources:

\paragraph{(i) stochastic trace error.}
\[
\mathrm{Var}\big(\widehat{a}_i\big)
=
O\!\left(\frac{\|M_i\|_F^2}{q}\right).
\]

\paragraph{(ii) linear solver error.}
Let $y_k$ be the approximate solution. In the symmetric positive definite
or regularized case, combining \eqref{eq:bilinear} with the duality between
the $H_T$- and $H_T^{-1}$-norms gives
\[
|y^\top(\partial_{i}H_T)\xi
-
y_k^\top(\partial_{i}H_T)\xi|
\le
\|y-y_k\|_{H_T}\,\|(\partial_{i}H_T)\xi\|_{H_T^{-1}}.
\]
Together with \eqref{eq:cg-rate}, this shows that the linear-solver contribution
decays at the Krylov rate. Hence the total error can be controlled by increasing
the number of probes $q$ and tightening the Krylov solver tolerance.

\subsection{Overall computational complexity}

Combining Sections~\ref{sec:analytic}--\ref{sec:poly}, each stochastic probe requires
\begin{itemize}
\item $k$ Hessian--vector products (corresponding to Krylov iterations),
\item one third-order directional kernel evaluation.
\end{itemize}
Therefore, the total cost of estimating $a$ is
\[
O\bigl(qk\cdot \mathrm{cost}(Hv)\bigr).
\]
For polynomial objectives,
\[
\mathrm{cost}(Hv)=O(m s),
\]
which yields
\[
\mathrm{cost}(a)=O(qk\,m s).
\]

The discussion above identifies the cost of the log-determinant gradient subroutine:
for polynomial objectives, its dominant complexity is proportional to $qk\,m s$.
This subroutine is the key scalable ingredient in MF-LogDet; the full framework is presented in Section~\ref{sec:algorithm}.

A full end-to-end complexity analysis of the complete affine normal computation,
including the final tangent solve and structural scaling consequences,
is deferred to Section~\ref{sec:complexity}.

\section{Algorithmic Framework and Implementation}
\label{sec:algorithm}

In this section we present a complete matrix-free algorithm
for computing affine normal directions,
together with practical implementation strategies.
The formulation integrates the analytic reduction of Section~\ref{sec:analytic},
the polynomial kernels of Section~\ref{sec:poly},
and the stochastic trace estimation of Section~\ref{sec:stochastic}.

\subsection{Overview of the computational pipeline}

At a point $x\in\mathbb{R}^d$, the affine normal direction is given by
\[
d_{\mathrm{AN}}^{\mathrm{amb}}
\propto
Tu - \nu,
\]
where
\[
u = H_T^{-1}\Big(h-\frac{\|\nabla f(x)\|}{n+2}a\Big).
\]

Thus the computation consists of three main components:
\begin{itemize}
\item evaluation of $h = T^\top (\nabla^2 f(x)) \nu$,
\item estimation of $a = \nabla_t \log\det(H_T)$,
\item solution of the linear system $H_T u = b$.
\end{itemize}

Each of these steps admits a matrix-free implementation.

\subsection{Matrix-free operators}

Let $H=\nabla^2 f(x)$.
We define the tangent operator
\begin{equation}
\label{eq:HT-operator}
\mathcal{H}_T(v)
:=
T^\top\big(H(Tv)\big),
\qquad v\in\mathbb{R}^{d-1}.
\end{equation}

This operator can be applied using:
\begin{itemize}
\item one lifting $v \mapsto Tv$,
\item one Hessian--vector product,
\item one projection $T^\top(\cdot)$.
\end{itemize}

Thus no explicit formation of $H_T$ is required.

\subsection{Matrix-free log-determinant affine normal computation}

We now present the matrix-free log-determinant (MF-LogDet) procedure for affine normal computation.
The method consists of a common outer structure together with two possible ways to evaluate the tangent log-determinant gradient
\[
a=\nabla_t \log\det(H_T),
\]
namely, an \emph{exact tangent-space evaluation} and a \emph{stochastic Hutchinson approximation}. Recall that the ambient dimension is $d=n+1$, so that the coefficient $(n+2)^{-1}$ in the classical formula becomes $(d+1)^{-1}$ in the present notation.

\begin{algorithm}[h!]
\caption{MF-LogDet affine normal computation}
\label{alg:mf-logdet}
\KwIn{Point $x$, polynomial representation of $f$, routine for evaluating $a=\nabla_t \log\det(H_T)$ (exact or stochastic), regularization parameter $\lambda$}
\KwOut{Affine normal direction $d_{\mathrm{AN}}(x)$}

\BlankLine
\textbf{Step 1: Gradient and normal direction}\;
Compute
\[
g=\nabla f(x),
\qquad
\nu=\frac{g}{\|g\|}.
\]

\BlankLine
\textbf{Step 2: Tangent basis}\;
Construct an orthonormal basis
\[
T\in\mathbb{R}^{d\times(d-1)}
\]
such that
\[
T^\top \nu=0.
\]

\BlankLine
\textbf{Step 3: Tangent Hessian quantities}\;
Form the tangent Hessian operator
\[
H_T = T^\top \nabla^2 f(x)\, T
\]
and compute
\[
h = T^\top (\nabla^2 f(x)\, \nu).
\]

\BlankLine
\textbf{Step 4: Tangent log-determinant gradient}\;
Compute
\[
a=\nabla_t \log\det(H_T),
\]
either by the exact tangent-space formula (Algorithm~\ref{alg:mf-logdet-exact})
or by the stochastic Hutchinson approximation (Algorithm~\ref{alg:mf-logdet-hutch}).

\BlankLine
\textbf{Step 5: Tangent linear system}\;
Set
\[
b = h - \frac{\|g\|}{d+1} a,
\]
and solve
\[
(H_T+\lambda I)u=b.
\]

\BlankLine
\textbf{Step 6: Affine normal assembly}\;
Set
\[
d_{\mathrm{AN}}(x)=Tu-\nu.
\]

\Return $d_{\mathrm{AN}}(x)$\;
\end{algorithm}

\begin{algorithm}[h!]
\caption{Exact evaluation of $a=\nabla_t \log\det(H_T)$}
\label{alg:mf-logdet-exact}
\KwIn{Point $x$, tangent basis $T$, tangent Hessian block $H_T$, regularization parameter $\lambda$}
\KwOut{Exact tangent log-determinant gradient $a$}

\BlankLine
Initialize
\[
a\leftarrow 0.
\]

\For{$j=1,\dots,d-1$}{
    Let $e_j$ denote the $j$-th canonical basis vector in $\mathbb{R}^{d-1}$\;

    Solve
    \[
    (H_T+\lambda I)y^{(j)}=e_j.
    \]

    Lift
    \[
    u^{(j)}=Ty^{(j)},
    \qquad
    v^{(j)}=Te_j.
    \]

    Compute the directional third-derivative vector
    \[
    w^{(j)} = D^3f(x)[u^{(j)},v^{(j)},\cdot].
    \]

    Update
    \[
    a \leftarrow a + T^\top w^{(j)}.
    \]
}
\Return $a$\;
\end{algorithm}

\begin{algorithm}[h!]
\caption{Stochastic evaluation of $a=\nabla_t \log\det(H_T)$}
\label{alg:mf-logdet-hutch}
\KwIn{Point $x$, tangent basis $T$, tangent Hessian operator $H_T$, probe count $q$, regularization parameter $\lambda$}
\KwOut{Stochastic approximation of the tangent log-determinant gradient $a$}

\BlankLine
Initialize
\[
a\leftarrow 0.
\]

\For{$\ell=1,\dots,q$}{
    Sample a Rademacher probe
    \[
    \xi^{(\ell)}\in\mathbb{R}^{d-1}.
    \]

    Solve
    \[
    (H_T+\lambda I)y^{(\ell)}=\xi^{(\ell)}
    \]
    by CG/PCG.

    Lift
    \[
    u^{(\ell)}=Ty^{(\ell)},
    \qquad
    v^{(\ell)}=T\xi^{(\ell)}.
    \]

    Compute the directional third-derivative vector
    \[
    w^{(\ell)} = D^3f(x)[u^{(\ell)},v^{(\ell)},\cdot].
    \]

    Update
    \[
    a \leftarrow a + T^\top w^{(\ell)}.
    \]
}
\Return
\[
a/q.
\]
\end{algorithm}

\subsection{Computational structure}

The MF-LogDet formulation reduces affine normal computation to a small set of basic linear-algebraic primitives.
At the implementation level, the dominant operations are:
\begin{itemize}
\item gradient evaluation,
\item Hessian--vector products,
\item evaluation of the tangent log-determinant gradient $a$, either exactly or stochastically,
\item inner products and tangent-space projections,
\item and linear solves involving the tangent Hessian operator.
\end{itemize}

The key point is that the geometric quantity
\[
a=\nabla_t \log\det(H_T)
\]
is never computed through explicit third-order tensor contraction.
Instead, it is recovered either from an exact tangent-space trace formula or from a stochastic trace estimator. In these procedures, directional third-order contractions may appear as internal matrix-free primitives. Their efficient evaluation relies on the techniques of Section~\ref{sec:poly}, especially the matrix-free kernel of Section~\ref{sec:third-order-kernel}, which yields $O(ms)$ complexity in the sparse polynomial regime. Nevertheless, the algorithm never forms or directly computes a full third-order tensor as a standalone object.

\begin{remark}
The exact and stochastic variants of MF-LogDet differ in how the tangent log-determinant gradient is computed.

In the exact variant, one explicitly forms the tangent Hessian block
\[
H_T = T^\top \nabla^2 f(x) T
\]
and evaluates the tangent log-determinant gradient by the exact basis-wise trace formula, implemented through solves against canonical basis vectors and the corresponding directional third-order contractions.
This version is useful for formula-level verification and medium-scale benchmarking.

In the stochastic variant, one avoids explicit trace computation and instead estimates
\[
\nabla_t \log\det(H_T)
\]
through repeated linear solves and directional third-derivative evaluations.
This is the version relevant for scalability.
In particular, it avoids forming any full third-order tensor and reduces the dominant cost to repeated operator applications.
\end{remark}

\subsection{Parameter selection}

The stochastic MF-LogDet implementation involves three main numerical parameters: the number of Hutchinson probes $q$, the Krylov iteration budget (or tolerance), and the regularization parameter $\lambda$.

\paragraph{Number of probes $q$.}
The probe count $q$ controls the accuracy of the stochastic trace approximation.
Larger values of $q$ reduce the variance of the estimated log-determinant gradient, but increase the computational cost almost proportionally.
Our numerical experiments indicate that small-to-moderate values of $q$ already provide a useful accuracy--cost tradeoff, while very large probe counts yield diminishing returns.

\paragraph{Krylov iterations.}
Each stochastic probe requires the solution of a linear system involving the regularized tangent Hessian.
Hence the overall cost depends on the number of Krylov iterations needed for these inner solves.
This number is governed primarily by the conditioning of the regularized tangent operator.
In practice, one may either fix a modest iteration budget in large-scale settings or use a residual-based stopping criterion when higher accuracy is required.

\paragraph{Regularization $\lambda$.}
When the tangent Hessian is ill-conditioned or nearly singular, we solve
\[
(H_T+\lambda I)u=b
\]
instead of the unregularized system.
The parameter $\lambda>0$ stabilizes both the log-determinant gradient computation and the final tangent solve.
In practice, $\lambda$ should be chosen small enough not to distort the geometry, yet large enough to prevent numerical instability.

\subsection{Preconditioning strategies}

Preconditioning is important for the efficiency of the stochastic MF-LogDet implementation, since the cost of the method is strongly influenced by the inner Krylov solves.
Natural choices include:
\begin{itemize}
\item diagonal (Jacobi) preconditioners based on local curvature surrogates,
\item block-diagonal approximations when the polynomial structure exhibits weakly coupled variable groups,
\item structure-aware preconditioners derived from sparse polynomial interactions.
\end{itemize}

The most effective choice depends on the algebraic structure of the polynomial model.
In particular, when the monomial support graph is sparse, one may exploit this structure to build preconditioners that preserve the matrix-free character of the method while substantially reducing the number of inner iterations.

\subsection{Parallelization and implementation}

The MF-LogDet procedure is naturally parallelizable at several levels.
First, polynomial, gradient, and Hessian--vector evaluations decompose over monomials and can therefore be parallelized directly.
Second, the Hutchinson probes are independent and may be evaluated concurrently.
Third, the underlying matrix-free kernels are well suited to modern accelerator architectures, since the dominant cost is concentrated in repeated sparse directional contractions rather than dense tensor assembly.

This structure is particularly attractive on GPUs or multicore systems:
the expensive part of the computation lies in repeated Hessian--vector and directional third-derivative kernels, both of which admit efficient batched implementations.

\subsection{Summary}

The MF-LogDet reformulation turns affine normal computation into a sequence of matrix-free operator evaluations and linear-algebraic primitives. Its exact variant is useful for validation and controlled comparison with the original explicit formula, while its stochastic variant provides the scalable implementation relevant to high-dimensional sparse polynomial problems. In both cases, the method avoids explicit third-order tensor assembly and shifts the dominant cost to structured operator applications that are substantially more amenable to large-scale computation. The next section quantifies this advantage through a detailed complexity analysis.

\section{Complexity Analysis and Structural Scalability}
\label{sec:complexity}

In this section we provide a rigorous complexity analysis of the proposed
matrix-free affine normal computation framework.
Our goal is twofold:
(i) to quantify the computational cost in terms of problem structure,
and (ii) to explain why affine normal directions become scalable
in the polynomial setting.

Section~\ref{sec:analytic}--\ref{sec:stochastic} established
a sequence of reductions that transform affine normal computation
into matrix-free second-order primitives.
In particular, all operations reduce to combinations of
Hessian--vector products, linear system solves,
and stochastic trace estimation.

These reductions suggest that affine normal computation should admit
a complexity comparable to matrix-free Newton-type methods.
We now make this statement precise.

\subsection{Computational model}

We formalize the computational model and the structural parameters
that govern the complexity of the algorithm.

\begin{itemize}
\item $d=n+1$: ambient dimension,
\item $m$: number of monomials,
\item $s$: average support size of monomials,
\item $q$: number of stochastic probes,
\item $k$: number of Krylov iterations.
\end{itemize}

\begin{assumption}[Polynomial oracle model]
\label{ass:poly}
The objective function admits a sparse polynomial representation
\[
f(x)=\sum_{\ell=1}^m c_\ell x^{\alpha^{(\ell)}},
\]
with average support size
\[
s := \frac{1}{m}\sum_{\ell=1}^m |\supp(\alpha^{(\ell)})|.
\]
\end{assumption}

\begin{assumption}[Matrix-free oracle]
\label{ass:oracle}
We assume access to gradient evaluation and Hessian--vector products.
Higher-order directional quantities are computed using matrix-free
polynomial evaluations, without explicit formation of third-order tensors.
\end{assumption}

These primitives, together with Krylov subspace methods
and stochastic trace estimation, are sufficient to implement
all steps of Algorithm~\ref{alg:mf-logdet} without explicitly forming
matrices or tensors.

\subsection{Cost of basic primitives}

We first establish the cost of evaluating derivative operators.

\begin{lemma}[Cost of derivative primitives]
\label{lem:primitive-cost}
Under Assumption~\ref{ass:poly}, the following hold:
\begin{align*}
\mathrm{cost}(\nabla f) &= O(ms),\\
\mathrm{cost}(Hv) &= O(ms),\\
\mathrm{cost}(D^3 f[u,v,\cdot]) &= O(ms).
\end{align*}
\end{lemma}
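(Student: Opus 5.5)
The plan is to treat the three bounds separately. Each one reduces, by linearity of differentiation in $f$, to a per-monomial cost of $O(|\supp(\alpha^{(\ell)})|)$. Summing over $\ell$ then gives $\sum_{\ell}|\supp(\alpha^{(\ell)})| = ms$ by the definition of $s$ in Assumption~\ref{ass:poly}.

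The second and third bounds are already established. The Hessian--vector bound is exactly Proposition~\ref{prop:Hv-kernel}, combined with the identity $\sum_\ell|\supp(\alpha^{(\ell)})|=ms$. The third-order bound is Proposition~\ref{prop:d3-kernel}. I would simply invoke these results.

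The gradient bound needs a short argument modeled on those proofs. For $t_\ell(x)=c_\ell x^{\alpha^{(\ell)}}$, I would use the gradient formula $\partial_i t_\ell(x)=t_\ell(x)\,\alpha_{\ell i}/x_i$ for $i\in S_\ell:=\supp(\alpha^{(\ell)})$, and zero otherwise. The steps are:
\begin{enumerate}
\item evaluate $t_\ell(x)$ as a product over $S_\ell$, which costs $O(|S_\ell|)$ operations, treating each power $x_i^{\alpha_{\ell i}}$ as $O(1)$ (consistent with the earlier propositions, where degree is regarded as bounded);
\item perform $|S_\ell|$ scatter-add updates into the output vector, each of cost $O(1)$.
\end{enumerate}
Accumulating over $\ell$ gives $O(ms)$. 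The zero initialization of the output in $\mathbb{R}^d$ costs $O(d)$. I would either absorb this into the convention of the previous propositions, where it is likewise ignored, or note that it is dominated whenever every variable appears in some monomial, since then $d\le ms$.

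The main obstacle is the division by $x_i$ in these formulas. It requires $x_i\neq0$ on $S_\ell$, which the derivation of \eqref{eq:d3-kernel} assumes explicitly. To make the lemma hold at every $x$, I would handle the case $x_i=0$ with a division-free variant. One option is to use prefix and suffix products over $S_\ell$ to obtain $\prod_{j\neq i}x_j^{\alpha_{\ell j}}$ for all $i\in S_\ell$ in $O(|S_\ell|)$ total. Another is to count the zero coordinates in $S_\ell$ and, when there are at most two, evaluate the few surviving terms directly, since any term with more vanishing factors than derivatives is zero.

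The same product-exclusion trick extends to the Hessian--vector and third-order kernels, where up to two or three indices are removed. This still costs $O(|S_\ell|)$, so all three bounds hold without the nonvanishing assumption, up to constants depending only on the derivative order.
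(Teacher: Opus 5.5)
Your proposal is correct and follows the paper's argument. By linearity, each monomial contributes only on its support at cost $O(|\supp(\alpha^{(\ell)})|)$, and summing over $\ell$ gives $\sum_{\ell}|\supp(\alpha^{(\ell)})|=ms$; the $Hv$ and $D^3$ bounds are exactly Propositions~\ref{prop:Hv-kernel} and~\ref{prop:d3-kernel}. Your treatment of vanishing coordinates and of the $O(d)$ output initialization goes beyond the paper's one-line proof, which silently relies on the division-based kernels \eqref{eq:poly-Hv} and \eqref{eq:d3-kernel}. Of your two fixes, the zero-counting one extends most directly to the second- and third-order kernels, since literal two-index product exclusion over all pairs would cost $O(|S_\ell|^2)$ unless done with dual-number (jet) prefix products.
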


\begin{proof}
Each monomial contributes only on its support set.
The gradient, Hessian--vector product, and third-order contraction
can all be computed via sparse monomial contractions,
with the contribution of the $\ell$-th monomial requiring
$O(|\supp(\alpha^{(\ell)})|)$ operations.
Summing over $m$ monomials yields the result.
\end{proof}

\begin{remark}
Lemma~\ref{lem:primitive-cost} shows that all derivative computations
scale linearly with the sparsity of the polynomial representation,
rather than quadratically or cubically in the ambient dimension.
\end{remark}

\subsection{Cost of stochastic log-determinant gradient}

We now analyze the cost of estimating
\[
a = \nabla_t \log\det(H_T).
\]

\begin{proposition}[Cost per stochastic probe]
\label{prop:probe-cost}
Each Hutchinson probe for estimating $a$
requires:
\begin{itemize}
\item $k$ Hessian--vector products (Krylov iterations),
\item one third-order directional evaluation.
\end{itemize}
Hence the cost per probe is
\[
O(k\,ms).
\]
\end{proposition}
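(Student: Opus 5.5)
The plan is to unroll one iteration of the loop in Algorithm~\ref{alg:mf-logdet-hutch} and bound each of its operations by the primitive costs in Lemma~\ref{lem:primitive-cost}. A single probe does four things: it samples $\xi\in\mathbb{R}^{d-1}$; it solves $(H_T+\lambda I)y=\xi$ approximately with $k$ Krylov iterations; it lifts $u=Ty$, $v=T\xi$ and evaluates $w=D^3f(x)[u,v,\cdot]$; and it accumulates $T^\top w$. I will cost these pieces separately and add them.

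\textbf{Krylov solve.} CG, MINRES and GMRES all access the operator $v\mapsto (H_T+\lambda I)v$ only through matrix-free products, with one product per iteration. By \eqref{eq:HT-operator}, each product consists of a lift $Tv$, one ambient Hessian--vector product, a projection $T^\top(\cdot)$, and an $O(d)$ shift by $\lambda v$. Lemma~\ref{lem:primitive-cost} bounds the Hessian--vector product by $O(ms)$. The remaining work per iteration is inner products and vector updates of length $d$, which costs $O(d)$ for CG or short-recurrence MINRES. So $k$ iterations cost $O(k(ms+d))$ plus the cost of applying $T$ and $T^\top$.

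\textbf{Third-order evaluation and accumulation.} The single third-order directional evaluation costs $O(ms)$ by Proposition~\ref{prop:d3-kernel}. The final projection $T^\top w$ and the accumulation into $a$ are one more lift/projection and an $O(d)$ update.

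\textbf{Main obstacle.} The difficulty is to absorb the lift/projection and vector overheads into $O(k\,ms)$, since a dense $T$ would cost $O(d^2)$ per application. I plan to handle this in two steps.

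\emph{Applying $T$ cheaply.} I will not store $T$ explicitly. Instead I take $T$ to be the tangent block of a Householder reflector $Q=I-2ww^\top/\|w\|^2$ with $Q e_d=\nu$. Then $Tv$ and $T^\top z$ are each a rank-one update costing $O(d)$.

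\emph{Absorbing the $O(d)$ terms.} I will argue that $ms\gtrsim d$ under the natural nondegeneracy assumption that every variable appears in some monomial. Without that assumption, the Hessian has identically zero rows and columns, so $H_T$ restricted to those variables is not invertible without $\lambda$, and those variables can be discarded. Alternatively, I can state the bound as $O(k(ms+d))$ and remark that $d\le ms$ in this nondegenerate case.

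Summing the pieces gives $O(k\,ms)+O(ms)+O(d)=O(k\,ms)$ per probe, using $k\ge 1$.
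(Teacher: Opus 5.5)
Your proposal is correct and follows the same accounting as the paper: $k$ Krylov iterations, each dominated by one Hessian--vector product costing $O(ms)$ by Lemma~\ref{lem:primitive-cost}, plus one $O(ms)$ third-order evaluation, giving $O((k+1)ms)=O(kms)$. The one difference is the $O(d)$ lifting and projection cost, which the paper simply neglects ``in the regimes of interest.'' You justify it instead: a Householder representation of $T$ makes each application of $T$ or $T^\top$ cost $O(d)$ rather than $O(d^2)$, and $d\le \sum_\ell|\supp(\alpha^{(\ell)})| = ms$ whenever every variable appears in some monomial, which tightens the paper's argument without changing its route.
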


\begin{proof}
Each Krylov iteration requires one application of the operator
$v \mapsto H_T v$, which reduces to a Hessian--vector product.
Thus $k$ iterations cost $O(k\,ms)$ by Lemma~\ref{lem:primitive-cost}.
The third-order contraction also costs $O(ms)$.
Combining these terms yields the total cost
\[
O(k\,ms)+O(ms)=O((k+1)ms)=O(kms).
\]
Here, we neglect the $O(d)$ cost of lifting and projection,
which is dominated by the polynomial Hessian--vector kernel
in the regimes of interest.
\end{proof}

\begin{proposition}[Cost of log-determinant gradient estimation]
\label{prop:logdet-cost}
Using $q$ independent stochastic probes,
the cost of estimating $a = \nabla_t \log\det(H_T)$ is
\[
O(qk\,ms).
\]
\end{proposition}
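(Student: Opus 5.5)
The bound follows by summing the per-probe cost of Proposition~\ref{prop:probe-cost} over the $q$ probes of Algorithm~\ref{alg:mf-logdet-hutch} and accounting for the bookkeeping outside the probes. I would first fix the computational model: the Krylov budget $k$ is a fixed iteration count, or an upper bound on the iterations used by each inner solve of $(H_T+\lambda I)y^{(\ell)}=\xi^{(\ell)}$. The probes are independent, and each runs the same sequence of operations on its own Rademacher vector $\xi^{(\ell)}$.

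Next I would account for one loop iteration $\ell$ of Algorithm~\ref{alg:mf-logdet-hutch}. Sampling $\xi^{(\ell)}\in\mathbb{R}^{d-1}$ costs $O(d)$. The regularized Krylov solve costs $O(k\,ms)$ by Proposition~\ref{prop:probe-cost}, since each iteration applies $\mathcal{H}_T$ once via \eqref{eq:HT-operator}, which reduces to one Hessian--vector product of cost $O(ms)$ (Lemma~\ref{lem:primitive-cost}), plus $O(d)$ work for the shift $\lambda I$ and the CG vector updates. The lifts $Ty^{(\ell)}$ and $T\xi^{(\ell)}$ and the projection $T^\top w^{(\ell)}$ are counted as $O(d)$, as in that proposition. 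The directional contraction $w^{(\ell)}=D^3f(x)[u^{(\ell)},v^{(\ell)},\cdot]$ costs $O(ms)$ by Proposition~\ref{prop:d3-kernel}, and accumulating into $a$ costs $O(d)$. So one iteration costs $O(k\,ms)+O(k d)$.

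Summing over $\ell=1,\dots,q$ and adding the final division $a/q$, which is $O(d)$, gives a total of $O(qk\,ms)+O(qkd)$. To absorb the $O(qkd)$ term I would adopt the same convention as in the proof of Proposition~\ref{prop:probe-cost}: the lifting, projection and vector-update work is dominated by the polynomial kernel. A sufficient condition is $d=O(ms)$, which holds whenever every variable appears in at least one monomial, since then $\sum_\ell|\supp(\alpha^{(\ell)})|\ge d$. This yields $O(qk\,ms)$.

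The only delicate point is this treatment of the $T$-dependent operations. With an explicit dense basis $T$, applying $T$ or $T^\top$ would cost $O(d^2)$, not $O(d)$. I would address this by noting that $T$ can be taken implicitly, for example as a Householder reflector sending $e_d$ to $\nu$, so that each lift or projection costs $O(d)$. I would state this choice explicitly as part of the computational model and not treat it as a hidden assumption.
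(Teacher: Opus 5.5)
Your proposal is correct and takes the same route as the paper, whose proof simply sums the per-probe bound of Proposition~\ref{prop:probe-cost} over the $q$ probes. Your extra bookkeeping makes explicit what the paper dismisses as negligible: the $O(qkd)$ overhead is absorbed via $d\le ms$, and an implicit Householder representation of $T$ makes lifts and projections cost $O(d)$ rather than $O(d^2)$; moreover, $d\le ms$ holds automatically whenever $H_T$ is nonsingular, since a variable $x_i$ absent from every monomial gives $g_i=0$ and $He_i=0$, which makes $T^\top e_i$ a null vector of $H_T$.
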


\begin{proof}
This follows directly from Proposition~\ref{prop:probe-cost}
by summing over $q$ probes.
\end{proof}

\subsection{Cost of solving the tangent linear system}

The affine normal direction requires solving
\[
H_T u = b.
\]

\begin{proposition}[Krylov complexity]
\label{prop:krylov}
Assume that $H_T$ is symmetric positive definite
with condition number $\kappa$.
Then the conjugate gradient method computes an $\varepsilon$-accurate solution
in
\[
k = O\big(\sqrt{\kappa}\log(1/\varepsilon)\big)
\]
iterations.
\end{proposition}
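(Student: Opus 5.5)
The plan is to invoke the standard CG error bound already recorded in \eqref{eq:cg-rate}, applied to the system $H_T u=b$ instead of $H_T y=\xi$. We first fix what ``$\varepsilon$-accurate'' means. The natural choice, consistent with \eqref{eq:cg-rate}, is relative accuracy in the energy norm:
\[
\|u-u_k\|_{H_T}\le \varepsilon\,\|u\|_{H_T},
\]
where $u_0=0$ and $u=H_T^{-1}b$ is the exact solution. CG is well defined and \eqref{eq:cg-rate} holds because $H_T$ is symmetric positive definite. This bound comes from the classical Chebyshev polynomial argument: CG minimizes the $H_T$-norm error over $u_0+\mathcal{K}_k(H_T,r_0)$. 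Hence the error is bounded by $\min_{p_k(0)=1}\max_{\lambda\in[\lambda_{\min},\lambda_{\max}]}|p_k(\lambda)|$, and the shifted Chebyshev polynomial gives the factor $2\rho^k$ with $\rho=(\sqrt\kappa-1)/(\sqrt\kappa+1)$. We take this as given, as the paper does.

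It then suffices to find $k$ with $2\rho^k\le\varepsilon$, that is, $k\ge \log(2/\varepsilon)/\log(1/\rho)$. The key elementary step is the lower bound
\[
\log(1/\rho)=\log\frac{1+1/\sqrt\kappa}{1-1/\sqrt\kappa}\ \ge\ \frac{2}{\sqrt\kappa}.
\]
This follows from the series $\log\frac{1+t}{1-t}=2(t+t^3/3+\cdots)\ge 2t$ for $t=1/\sqrt\kappa\in(0,1]$. In the degenerate case $\kappa=1$ we have $\rho=0$, and CG converges in one step. Consequently
\[
k=\Big\lceil \tfrac{\sqrt\kappa}{2}\log(2/\varepsilon)\Big\rceil
\]
iterations suffice, which is $O(\sqrt\kappa\log(1/\varepsilon))$ for $\varepsilon<1$.

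Finally, we remark how other accuracy notions reduce to this one, at the price of a change of constants inside the logarithm. If accuracy is measured in the Euclidean norm, or by the residual $\|b-H_Tu_k\|\le\varepsilon\|b\|$, the norm equivalences
\[
\lambda_{\min}\|v\|^2\le\|v\|_{H_T}^2\le\lambda_{\max}\|v\|^2
\]
introduce a factor of at most $\kappa$ (or $\sqrt\kappa$). We then replace $\varepsilon$ by $\varepsilon/\kappa$, giving $k=O(\sqrt\kappa\log(\kappa/\varepsilon))$. This coincides with the stated bound when $\varepsilon$ is the dominant term, or when one simply uses the energy-norm criterion.

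The main obstacle is not the computation but this interpretive point. The statement leaves the error measure unspecified, so the proof must commit to the energy-norm relative criterion to obtain the clean $O(\sqrt\kappa\log(1/\varepsilon))$ form. Alternatively, it must note the extra $\log\kappa$ for other criteria, and the use of exact arithmetic in the CG analysis.
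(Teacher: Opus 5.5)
Your proposal is correct and takes the same route as the paper. The paper's proof only cites the classical CG estimate (the bound \eqref{eq:cg-rate}), while you fix the energy-norm meaning of ``$\varepsilon$-accurate'' and carry out the standard step $2\rho^k\le\varepsilon$ via $\log(1/\rho)\ge 2/\sqrt{\kappa}$. One minor nit: $\lceil\tfrac{\sqrt{\kappa}}{2}\log(2/\varepsilon)\rceil=O(\sqrt{\kappa}\log(1/\varepsilon))$ needs $\varepsilon$ bounded away from $1$ (say $\varepsilon\le 1/2$), not merely $\varepsilon<1$.
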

\begin{proof}
The bound is the classical convergence estimate for the conjugate gradient method
applied to a symmetric positive definite system; see, e.g.,
\cite{saad2003,greenbaum1997}.
\end{proof}

\begin{remark}
Each iteration requires one application of the operator
$v \mapsto H_T v$, which is implemented via
Hessian--vector products.
\end{remark}

\begin{remark}
In the indefinite case, one may use MINRES or solve
$(H_T + \lambda I)u=b$
with $\lambda>0$ to ensure stability.
The complexity bounds remain unchanged up to constants.
\end{remark}

\subsection{End-to-end complexity}

We now combine all components to obtain the total complexity.

\begin{theorem}[End-to-end complexity of affine normal computation]
\label{thm:final-complexity}
Under Assumptions~\ref{ass:poly}--\ref{ass:oracle},
the affine normal direction can be computed with computational cost
\[
O\big(q\,\sqrt{\kappa}\,\log(1/\varepsilon)\cdot ms\big),
\]
using a matrix-free implementation, where $\kappa$ denotes the condition number of the tangent linear system
(and of its regularized counterpart when a shift $\lambda I$ is used),
and $\varepsilon$ is the prescribed Krylov solve tolerance.

In particular, up to a stochastic factor $q$,
the computational complexity is of the same order as matrix-free Newton-CG-type methods.
\end{theorem}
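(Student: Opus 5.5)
The plan is to follow the six steps of Algorithm~\ref{alg:mf-logdet}, using its stochastic variant (Algorithm~\ref{alg:mf-logdet-hutch}) in Step~4. I will bound the cost of each step with the earlier primitive estimates and then add the bounds up.

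\textbf{Steps 1--3 and 6.} Step~1 needs one gradient, which costs $O(ms)$ by Lemma~\ref{lem:primitive-cost}, plus an $O(d)$ normalization. Step~2 needs the tangent basis $T$. I would not form $T$ as a dense $d\times(d-1)$ array. Instead I would represent it implicitly, for example by a Householder reflector $Q=I-2ww^\top$ with $Qe_d=\pm\nu$, taking $T$ to be its first $d-1$ columns. Then $v\mapsto Tv$ and $w\mapsto T^\top w$ each cost $O(d)$. In Step~3, $H_T$ is used only as the operator $\mathcal H_T$ from \eqref{eq:HT-operator}. Each application costs one Hessian--vector product plus $O(d)$ for lifting and projection, and $h=T^\top(H\nu)$ is one such application. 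Step~6 is again $O(d)$. I will assume, as in the proof of Proposition~\ref{prop:probe-cost}, that $d=O(ms)$; this holds whenever every variable appears in some monomial. Under that assumption, Steps~1--3 and~6 together cost $O(ms)$.

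\textbf{Steps 4 and 5.} For Step~4, Proposition~\ref{prop:logdet-cost} gives a cost of $O(qk\,ms)$, where $k$ is the per-probe Krylov budget. Step~5 is a single Krylov solve with $\mathcal H_T$ (or $\mathcal H_T+\lambda I$), costing $O(k\,ms)$. For both, I take $k$ from Proposition~\ref{prop:krylov}: reaching tolerance $\varepsilon$ with CG requires $k=O(\sqrt{\kappa}\log(1/\varepsilon))$ iterations. In the shifted or indefinite case I rely on the subsequent remark, which says the bound persists up to constants with $\kappa$ replaced by the condition number of the regularized operator.

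\textbf{Combining.} Adding the pieces gives
\[
O(ms)+O\big((q+1)\sqrt{\kappa}\log(1/\varepsilon)\,ms\big)=O\big(q\sqrt{\kappa}\log(1/\varepsilon)\,ms\big),
\]
using $q\ge1$. For the comparison with Newton--CG, note that a matrix-free Newton--CG step consists of one gradient and one CG solve with Hessian--vector products, which costs $O(\sqrt{\kappa}\log(1/\varepsilon)\,ms)$. The affine normal computation therefore differs only by the factor $q$.

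\textbf{Main obstacle.} The step I expect to need the most care is justifying that lifting, projection and the tangent basis cost only $O(d)$ and are absorbed into $O(ms)$. This requires the implicit Householder representation of $T$, since a dense orthonormal $T$ would cost $O(d^2)$ per application, together with the mild assumption $d\lesssim ms$. A secondary point is that in the indefinite case the $\sqrt{\kappa}$ bound is only heuristic for MINRES. I will state that the theorem is to be read under the positive definite or regularized regime, where Proposition~\ref{prop:krylov} applies.
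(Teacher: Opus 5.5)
Your proposal is correct and follows essentially the same route as the paper: one Hessian--vector product for $h$, Proposition~\ref{prop:logdet-cost} for the stochastic log-determinant gradient, and $k$ operator applications for the final tangent solve, summed and then combined with $k=O(\sqrt{\kappa}\log(1/\varepsilon))$ from Proposition~\ref{prop:krylov}. Your implicit Householder representation of $T$ and the explicit assumption $d=O(ms)$ are a useful refinement. They justify the $O(d)$ lifting and projection cost that the paper simply declares negligible in the proof of Proposition~\ref{prop:probe-cost}; a dense $T$ would cost $O(d^2)$ per application.
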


\begin{proof}
The computation consists of three steps:

\paragraph{Step 1: Mixed Hessian term.}
Computing
\[
h = T^\top (\nabla^2 f(x)\,\nu)
\]
requires one Hessian--vector product, hence costs $O(ms)$.

\paragraph{Step 2: Log-determinant gradient.}
By Proposition~\ref{prop:logdet-cost},
estimating $a$ requires $O(qk\,ms)$ operations.

\paragraph{Step 3: Linear system solve.}
Solving
\[
H_T u = b
\]
requires $k$ Krylov iterations,
each costing $O(ms)$,
thus totaling $O(k\,ms)$.

\paragraph{Total cost.}
Combining all terms yields
\[
O(qk\,ms).
\]
Substituting
\[
k = O(\sqrt{\kappa}\log(1/\varepsilon))
\]
from Proposition~\ref{prop:krylov}
gives the result.
\end{proof}

\subsection{Error analysis}

We now quantify how approximation errors in the log-determinant gradient and inexact linear solves propagate to the final affine normal direction. Let
\[
A := H_T+\lambda I,
\qquad
\beta := \frac{\|g\|}{d+1},
\]
and consider the exact tangent coefficient vector
\[
u = A^{-1}(h-\beta a),
\]
where
\[
a = \nabla_t \log\det(H_T).
\]
Suppose that $\hat a$ is an approximation of $a$, and that $\hat u$ is obtained from an inexact linear solve with residual
\[
r_{\mathrm{Krylov}}
:=
A\hat u-(h-\beta \hat a).
\]

\begin{proposition}[Error propagation to the tangent coefficient vector]
\label{prop:error-u}
Assume that $A=H_T+\lambda I$ is invertible. Then
\begin{equation}
\label{eq:error-u}
\|\hat u-u\|
\le
\|A^{-1}\|
\Big(
\beta \|\hat a-a\| + \|r_{\mathrm{Krylov}}\|
\Big).
\end{equation}
In particular, if $A$ is symmetric positive definite, then
\begin{equation}
\label{eq:error-u-spd}
\|\hat u-u\|
\le
\frac{1}{\lambda_{\min}(A)}
\Big(
\beta \|\hat a-a\| + \|r_{\mathrm{Krylov}}\|
\Big).
\end{equation}
\end{proposition}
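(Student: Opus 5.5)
The plan is to obtain an exact algebraic identity for $\hat u-u$ and then take norms. By the definition of the residual, $A\hat u = h-\beta\hat a + r_{\mathrm{Krylov}}$. By the definition of the exact tangent coefficient vector, $Au = h-\beta a$. Subtracting the second relation from the first eliminates the mixed term $h$ and leaves
\[
A(\hat u-u) = -\beta(\hat a-a) + r_{\mathrm{Krylov}}.
\]
Since $A$ is assumed invertible, we may apply $A^{-1}$ to both sides, which gives $\hat u-u = A^{-1}\bigl(-\beta(\hat a-a)+r_{\mathrm{Krylov}}\bigr)$.

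Next, we take Euclidean norms and use the submultiplicativity of the induced operator norm, followed by the triangle inequality. Because $\beta=\|g\|/(d+1)\ge 0$, we have $\|\beta(\hat a-a)\|=\beta\|\hat a-a\|$, and the bound \eqref{eq:error-u} follows immediately.

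For the symmetric positive definite case, we use the fact that $A^{-1}$ is symmetric positive definite with eigenvalues $1/\lambda_i(A)$. Its spectral norm is therefore $\|A^{-1}\| = 1/\lambda_{\min}(A)$. Substituting this into \eqref{eq:error-u} yields \eqref{eq:error-u-spd}.

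There is no genuine obstacle in this argument. The only point requiring care is to use the same norm consistently throughout, namely the Euclidean vector norm together with its induced spectral operator norm. It is also worth noting that the residual is defined relative to the perturbed right-hand side $h-\beta\hat a$, so it combines additively with the trace-estimation error rather than being measured against the exact system.
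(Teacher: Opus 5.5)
Your proposal is correct and follows the paper's proof step for step. You subtract $Au=h-\beta a$ from $A\hat u=h-\beta\hat a+r_{\mathrm{Krylov}}$, apply $A^{-1}$, bound using submultiplicativity and the triangle inequality, and use $\|A^{-1}\|=1/\lambda_{\min}(A)$ in the SPD case; your remark that $\beta\ge 0$ is what justifies $\|\beta(\hat a-a)\|=\beta\|\hat a-a\|$ is a small detail the paper leaves implicit.
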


\begin{proof}
By definition of the exact and approximate systems,
\[
Au = h-\beta a,
\qquad
A\hat u = h-\beta \hat a + r_{\mathrm{Krylov}}.
\]
Subtracting the two identities gives
\[
A(\hat u-u)
=
-\beta(\hat a-a)+r_{\mathrm{Krylov}}.
\]
Since $A$ is invertible,
\[
\hat u-u
=
A^{-1}\bigl(-\beta(\hat a-a)+r_{\mathrm{Krylov}}\bigr).
\]
Taking norms and using the triangle inequality,
\[
\|\hat u-u\|
\le
\|A^{-1}\|
\Big(
\beta \|\hat a-a\| + \|r_{\mathrm{Krylov}}\|
\Big),
\]
which proves \eqref{eq:error-u}. If, in addition, $A$ is symmetric positive definite, then
\[
\|A^{-1}\|=\frac{1}{\lambda_{\min}(A)},
\]
and \eqref{eq:error-u-spd} follows immediately.
\end{proof}

The affine normal direction in ambient coordinates is
\[
d_{\mathrm{AN}} = Tu-\nu,
\]
where the columns of $T$ form an orthonormal basis of the tangent space and $\nu$ is the unit Euclidean normal.
Using the same $T$ and $\nu$, define the approximate direction
\[
\hat d_{\mathrm{AN}} = T\hat u-\nu.
\]

\begin{corollary}[Error propagation to the affine normal direction]
\label{cor:error-d}
Under the assumptions of Proposition~\ref{prop:error-u},
\begin{equation}
\label{eq:error-d}
\|\hat d_{\mathrm{AN}}-d_{\mathrm{AN}}\|
=
\|T(\hat u-u)\|
=
\|\hat u-u\|
\le
\|A^{-1}\|
\Big(
\beta \|\hat a-a\| + \|r_{\mathrm{Krylov}}\|
\Big).
\end{equation}
If $A$ is symmetric positive definite, then
\begin{equation}
\label{eq:error-d-spd}
\|\hat d_{\mathrm{AN}}-d_{\mathrm{AN}}\|
\le
\frac{1}{\lambda_{\min}(A)}
\Big(
\beta \|\hat a-a\| + \|r_{\mathrm{Krylov}}\|
\Big).
\end{equation}
\end{corollary}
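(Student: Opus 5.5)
The argument reduces directly to Proposition~\ref{prop:error-u}, combined with the fact that $T$ has orthonormal columns. We use the same tangent frame $T$ and the same unit normal $\nu$ in both $d_{\mathrm{AN}}$ and $\hat d_{\mathrm{AN}}$, so the $-\nu$ terms cancel exactly in the difference:
\[
\hat d_{\mathrm{AN}}-d_{\mathrm{AN}}=(T\hat u-\nu)-(Tu-\nu)=T(\hat u-u).
\]
This gives the first equality in \eqref{eq:error-d}.

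The second equality is the isometry property of $T$. Since $T^\top T=I_n$, every $w\in\mathbb{R}^n$ satisfies
\[
\|Tw\|^2=w^\top T^\top T w=\|w\|^2.
\]
We apply this with $w=\hat u-u$. The inequality in \eqref{eq:error-d} is then exactly the bound \eqref{eq:error-u} of Proposition~\ref{prop:error-u}, which applies because $A$ is assumed invertible.

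For the symmetric positive definite case, we use the same chain of equalities and substitute \eqref{eq:error-u-spd} instead. The ingredient there is $\|A^{-1}\|=1/\lambda_{\min}(A)$ for SPD $A$, which Proposition~\ref{prop:error-u} has already established.

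No step here is a real obstacle. The only point that needs care is that the approximate direction is built with the same $T$ and $\nu$ as the exact one. This is what makes the normal components cancel and the difference lie in the range of $T$. If $\nu$ (and hence $T$) were also perturbed, for example through an inexact gradient, extra terms would appear. The statement excludes that case by its construction.
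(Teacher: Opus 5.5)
Your proof is correct and is essentially the paper's argument. The common $-\nu$ terms cancel, so the difference is $T(\hat u-u)$; the orthonormal columns of $T$ make this an isometry; and both bounds then follow directly from Proposition~\ref{prop:error-u}.
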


\begin{proof}
Since
\[
\hat d_{\mathrm{AN}}-d_{\mathrm{AN}}
=
T(\hat u-u),
\]
and the columns of $T$ are orthonormal, we have
\[
\|T(\hat u-u)\|=\|\hat u-u\|.
\]
The estimate therefore follows directly from Proposition~\ref{prop:error-u}.
\end{proof}

In many numerical experiments, one compares normalized directions rather than the raw affine normal vectors.
The next estimate provides the corresponding perturbation bound.

\begin{corollary}[Normalized direction error]
\label{cor:error-normalized-d}
Assume that $d_{\mathrm{AN}}\neq 0$ and
\[
\|\hat d_{\mathrm{AN}}-d_{\mathrm{AN}}\|
\le
\frac{1}{2}\|d_{\mathrm{AN}}\|.
\]
Then
\begin{equation}
\label{eq:error-normalized-d}
\left\|
\frac{\hat d_{\mathrm{AN}}}{\|\hat d_{\mathrm{AN}}\|}
-
\frac{d_{\mathrm{AN}}}{\|d_{\mathrm{AN}}\|}
\right\|
\le
\frac{2}{\|d_{\mathrm{AN}}\|}
\|\hat d_{\mathrm{AN}}-d_{\mathrm{AN}}\|.
\end{equation}
Consequently,
\begin{equation}
\label{eq:error-normalized-d-full}
\left\|
\frac{\hat d_{\mathrm{AN}}}{\|\hat d_{\mathrm{AN}}\|}
-
\frac{d_{\mathrm{AN}}}{\|d_{\mathrm{AN}}\|}
\right\|
\le
\frac{2\|A^{-1}\|}{\|d_{\mathrm{AN}}\|}
\Big(
\beta \|\hat a-a\| + \|r_{\mathrm{Krylov}}\|
\Big).
\end{equation}
\end{corollary}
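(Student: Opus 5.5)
The argument is the standard perturbation bound for normalizing a nonzero vector, followed by Corollary~\ref{cor:error-d}. Write $x:=d_{\mathrm{AN}}$, $\hat x:=\hat d_{\mathrm{AN}}$ and $\delta:=\|\hat x-x\|$. The hypothesis $\delta\le\frac12\|x\|$ first gives $\|\hat x\|\ge\|x\|-\delta\ge\frac12\|x\|>0$. Hence $\hat x\neq 0$ and the normalized vector $\hat x/\|\hat x\|$ on the left of \eqref{eq:error-normalized-d} is well defined.

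To prove \eqref{eq:error-normalized-d}, I insert the intermediate vector $\hat x/\|x\|$ and apply the triangle inequality:
\[
\left\|\frac{\hat x}{\|\hat x\|}-\frac{x}{\|x\|}\right\|
\le
\left\|\frac{\hat x}{\|\hat x\|}-\frac{\hat x}{\|x\|}\right\|
+\left\|\frac{\hat x-x}{\|x\|}\right\|.
\]
The first term equals $\|\hat x\|\,\bigl|\|\hat x\|^{-1}-\|x\|^{-1}\bigr| = \bigl|\|x\|-\|\hat x\|\bigr|/\|x\|$. By the reverse triangle inequality this is at most $\delta/\|x\|$. The second term is exactly $\delta/\|x\|$. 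Adding the two gives the bound $2\delta/\|x\|$, which is \eqref{eq:error-normalized-d}.

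With this decomposition the hypothesis $\delta\le\frac12\|x\|$ is needed only to ensure $\hat x\ne0$; it does not enter the constant $2$. If I had instead inserted $x/\|\hat x\|$, a factor $1/\|\hat x\|$ would appear, and bounding it would require the hypothesis $\|\hat x\|\ge\frac12\|x\|$ and would worsen the constant. So the only point needing care is choosing the intermediate vector $\hat x/\|x\|$ so that the constant comes out as exactly $2$.

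Finally, \eqref{eq:error-normalized-d-full} follows by substituting the bound on $\delta=\|\hat d_{\mathrm{AN}}-d_{\mathrm{AN}}\|$ from \eqref{eq:error-d} in Corollary~\ref{cor:error-d} into the right-hand side of \eqref{eq:error-normalized-d}.
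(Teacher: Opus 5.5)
Your proof is correct and takes the same approach as the paper, which simply invokes ``the standard perturbation bound for normalization of nonzero vectors'' and then applies Corollary~\ref{cor:error-d}. Your argument (insert $\hat d_{\mathrm{AN}}/\|d_{\mathrm{AN}}\|$, then use the triangle and reverse triangle inequalities) is exactly the standard derivation that the paper leaves implicit, and you are right that the hypothesis $\|\hat d_{\mathrm{AN}}-d_{\mathrm{AN}}\|\le\frac12\|d_{\mathrm{AN}}\|$ is needed only to guarantee $\hat d_{\mathrm{AN}}\neq 0$.
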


\begin{proof}
The estimate \eqref{eq:error-normalized-d} is the standard perturbation bound for normalization of nonzero vectors. Applying Corollary~\ref{cor:error-d} then yields \eqref{eq:error-normalized-d-full}.
\end{proof}

The preceding estimates make explicit the two sources of inexactness in affine normal computation: the approximation error in the log-determinant gradient and the residual of the Krylov linear solve. In particular, they show that, provided the regularized tangent operator remains well conditioned, both errors propagate in a stable and essentially linear manner to the tangent coefficient vector and hence to the final affine normal direction. This justifies the use of stochastic trace estimation and inexact linear solvers in the scalable implementation.

\subsection{Structural interpretation}

A key conceptual consequence of the analysis is that
affine normal directions are fundamentally second-order objects. Importantly, no explicit third-order tensor is formed.
All higher-order contributions appear only through
directional contractions, whose cost matches that of
Hessian--vector products in the polynomial setting.

\begin{theorem}[Second-order reduction of affine normal computation]
\label{thm:second-order}
Affine normal directions can be computed using only:
\begin{itemize}
\item Hessian--vector products,
\item linear system solves,
\item stochastic trace estimation.
\end{itemize}
No explicit third-order tensor needs to be formed.
\end{theorem}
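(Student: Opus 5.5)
The plan is to trace the coordinate-free formula of Proposition~\ref{prop:AN-equiv-sec2} component by component and check that each ingredient is produced by one of the three listed primitives. That proposition gives $d_{\mathrm{AN}}^{\mathrm{amb}}\propto Tu-\nu$ with $u=H_T^{-1}(h-\frac{\|g\|}{n+2}a)$. So the direction is determined by four quantities: $\nu$, $h$, $a$, and the solve defining $u$.

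The quantities $\nu$ and $h$ are handled directly.
\begin{itemize}
\item $\nu=g/\|g\|$ needs only the gradient, which is the first-order oracle of Assumption~\ref{ass:oracle}.
\item The orthonormal frame $T$ never has to be stored densely. It can be taken as a Householder reflection sending $e_d$ to $\nu$, restricted to its first $n$ columns. Both $v\mapsto Tv$ and $w\mapsto T^\top w$ then cost $O(d)$ and use no derivative information.
\item $h=T^\top(H\nu)$ is a single Hessian--vector product followed by a projection.
\item The operator $\mathcal{H}_T(v)=T^\top H(Tv)$ from \eqref{eq:matrix-free-operator-sec2} is a Hessian--vector product sandwiched between a lift and a projection. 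Hence the final system $\mathcal{H}_T u=b$, or its shifted version $(H_T+\lambda I)u=b$, is a linear solve realized by CG or MINRES through Hessian--vector products only.
\end{itemize}

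The only nontrivial ingredient is $a$. Lemma~\ref{lem:logdet-sec2} identifies the third-order contraction $f^{pq}f_{pqi}$ with $a_i=\mathrm{tr}(H_T^{-1}\partial_i H_T)$. This replaces the contraction of the full tensor $f_{pqi}$ against $H_T^{-1}$ by a collection of traces.

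Writing each trace as $\mathbb{E}[\xi^\top H_T^{-1}(\partial_iH_T)\xi]$ turns the computation into stochastic trace estimation. A basis-vector sum gives the exact variant. Each probe costs one linear solve $H_Ty=\xi$, which again reduces to Hessian--vector products, plus the bilinear form \eqref{eq:bilinear}. I will show that all $n$ components of that bilinear form come out at once as the single vector $T^\top\nabla_x\big((Ty)^\top\nabla^2f(x)(T\xi)\big)$.

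The step I expect to be the main obstacle is justifying that this vector is a Hessian--vector-type primitive and not a disguised third-order tensor. I would argue it as follows:
\begin{itemize}
\item $x\mapsto u^\top\nabla^2 f(x)v$ with $u=Ty$, $v=T\xi$ fixed is a scalar function.
\item Its gradient is the directional derivative of the Hessian--vector product. Concretely, $D^3f[u,v,\cdot]=\frac{d}{d\tau}\big|_{\tau=0}\nabla^2 f(x+\tau w)v$ paired with $u$, or equivalently one forward-over-reverse differentiation of $x\mapsto \nabla^2f(x)v$.
\item So it is obtained by differentiating a Hessian--vector product once more in a single direction. Only a vector in $\mathbb{R}^d$ is ever formed, never the $d^3$ array.
\item In the polynomial setting this is made fully explicit by the kernel \eqref{eq:d3-kernel} and Proposition~\ref{prop:d3-kernel}, at cost $O(ms)$ matching Lemma~\ref{lem:primitive-cost}.
\end{itemize}

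Finally, I would assemble the pieces:
\begin{itemize}
\item $b=h-\frac{\|g\|}{n+2}a$ from a Hessian--vector product and the trace estimator.
\item $u$ from a Krylov solve.
\item $d_{\mathrm{AN}}=Tu-\nu$ by a lift.
\end{itemize}
Every step uses only the three listed primitives, with directional contractions appearing solely inside the trace estimator. I would close by noting that the stochastic variant converges to the exact one as $q\to\infty$ by unbiasedness of the Hutchinson estimator, with errors controlled by Corollary~\ref{cor:error-d}.
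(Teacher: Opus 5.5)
Your proposal is correct and follows the same route as the paper: use Lemma~\ref{lem:logdet-sec2} to replace $f^{pq}f_{pqi}$ by $\partial_i\log\det(H_T)$, realize $h$, the operator $H_T$ and the final tangent solve through Hessian--vector products and Krylov methods, and obtain $a$ from Hutchinson probes. Your version is more explicit than the paper's three-line argument, because it shows that each probe also needs the directional contraction $T^\top D^3 f[Ty,T\xi,\cdot]$ and explains why this is a tensor-free, vector-valued primitive; the paper's proof absorbs that contraction silently into ``stochastic trace estimation.''
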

\begin{proof}
By Lemma~\ref{lem:logdet-sec2}, the third-order contraction term
in the analytic affine normal formula can be rewritten as
\[
f^{pq}f_{pqi}=\partial_{i}\log\det(H_T).
\]
Hence the affine normal depends on the tangent Hessian only through:
(i) Hessian--vector products used to apply the matrix-free operator $H_T$,
(ii) linear system solves involving $H_T$,
and (iii) stochastic trace estimation of the log-determinant gradient.
No explicit third-order tensor needs to be formed.
\end{proof}

\begin{remark}
Although the affine normal is classically defined through third-order differential geometry, the log-determinant identity shows that its computation can be organized around matrix-free second-order linear-algebraic primitives. In the polynomial setting, the remaining higher-order information enters only through structured directional contractions, which are computable at the same sparsity order as Hessian--vector products.
\end{remark}

\begin{remark}[Interpretation]
The affine normal direction can be viewed as a curvature-corrected
Newton direction, where the correction term
$\nabla \log\det(H_T)$ accounts for local volume distortion
of level sets.
\end{remark}

\subsection{Scalability with respect to dimension}

The complexity depends on $ms$, not on $d^2$ or $d^3$.

\begin{itemize}
\item If $f$ is sparse, $ms \ll d^2$,
\item If $f$ has bounded support size and the number of monomials grows linearly with dimension, then $ms = O(d)$.
\end{itemize}

Thus affine normal computation scales linearly in dimension
for structured polynomial objectives.

\begin{corollary}\label{cor:linear-complexity-O(d)}
If the support size is uniformly bounded, i.e., $s=O(1)$,
the number of monomials satisfies $m=O(d)$,
and the parameters $q$, $\kappa$, and $\varepsilon$ are treated as constants,
then affine normal computation has linear complexity $O(d)$.
\end{corollary}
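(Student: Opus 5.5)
The corollary follows by specializing Theorem~\ref{thm:final-complexity} to the stated regime. That theorem gives the end-to-end bound
\[
O\big(q\,\sqrt{\kappa}\,\log(1/\varepsilon)\cdot ms\big),
\]
which rests on the primitive costs of Lemma~\ref{lem:primitive-cost} and the Krylov count of Proposition~\ref{prop:krylov}.

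\textbf{Step 1: bound the sparsity scale.} By definition $s=\frac{1}{m}\sum_{\ell}|\supp(\alpha^{(\ell)})|$, so $ms=\sum_{\ell}|\supp(\alpha^{(\ell)})|$. If every support size is at most a constant $C$, then $ms\le Cm$. With $m=O(d)$ this gives $ms=O(d)$. I will use this uniform-bound reading of ``$s=O(1)$''; the weaker average-bound reading also suffices, since only the product $ms$ enters.

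\textbf{Step 2: absorb the constant parameters.} The factor $q\sqrt{\kappa}\log(1/\varepsilon)$ is a constant by hypothesis. Substituting into the theorem gives total cost $O(d)$.

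\textbf{Step 3: account for the costs the theorem treats as lower order.} Proposition~\ref{prop:probe-cost} explicitly neglects the lifting and projection maps $v\mapsto Tv$ and $w\mapsto T^\top w$. These, together with vector updates, inner products and norms, appear a constant number of times per Krylov iteration and per probe. There are $O(qk)=O(1)$ such iterations and probes, so I must show each of these operations costs $O(d)$.

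\textbf{Main obstacle: the tangent basis $T$.} Forming a dense $T\in\mathbb{R}^{d\times(d-1)}$ explicitly costs $O(d^2)$, which would break the linear bound. I would resolve this by representing $T$ implicitly through a Householder reflector $Q=I-2ww^\top/\|w\|^2$ that maps $e_d$ to $\nu$, and taking $T$ to be its first $d-1$ columns. Then $Tv=Q\binom{v}{0}$ and $T^\top y$ equals the first $d-1$ entries of $Qy$. Both are computable in $O(d)$, and constructing $w$ also costs $O(d)$.

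With this, every step of Algorithm~\ref{alg:mf-logdet}, using the stochastic variant Algorithm~\ref{alg:mf-logdet-hutch}, costs $O(ms+d)=O(d)$ per primitive call. There are constantly many calls, so the total is $O(d)$. I would also note that the exact variant, Algorithm~\ref{alg:mf-logdet-exact}, loops over $d-1$ basis vectors and therefore does not attain $O(d)$. The corollary thus refers to the stochastic implementation.
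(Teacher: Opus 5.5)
Your proof is correct and follows the paper's own argument: the paper observes that $s=O(1)$ and $m=O(d)$ give $ms=O(d)$, and then substitutes this into Theorem~\ref{thm:final-complexity}. Your Step~3 is a strengthening rather than a different route. The paper only asserts, in Proposition~\ref{prop:probe-cost}, that lifting and projection cost $O(d)$ and can be neglected; your implicit Householder representation of $T$ is what actually makes that assertion true, since a dense $T$ would cost $O(d^2)$ to form and apply. Your remark that only the stochastic variant attains $O(d)$, and not Algorithm~\ref{alg:mf-logdet-exact}, is also accurate.
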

\begin{proof}
If $s=O(1)$ and the number of monomials scales linearly with dimension,
i.e., $m=O(d)$, then
\[
ms = O(d).
\]
Substituting into Theorem~\ref{thm:final-complexity}
gives the result.
\end{proof}

The preceding corollary addresses the common regime in which sparsity remains controlled while the ambient dimension grows. It is also useful to state the complexity directly in terms of the algebraic sparsity of the polynomial representation itself.
\begin{proposition}[Complexity in terms of sparsity]
\label{prop:complexity-nnz}
The computational cost of MF-LogDet affine normal computation is linear in the total sparsity of the polynomial representation:
\[
\mathrm{cost} = O(qk \cdot \mathrm{nnz}),
\qquad
\mathrm{nnz}:=\sum_{\ell=1}^m |\supp(\alpha^{(\ell)})|.
\]
\end{proposition}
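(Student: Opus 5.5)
The plan is to show that $\mathrm{nnz}=ms$, and then to re-express every cost bound already proved in terms of $\mathrm{nnz}$ instead of $ms$. By the definition of the average support size in Assumption~\ref{ass:poly},
\[
ms=\sum_{\ell=1}^m |\supp(\alpha^{(\ell)})|=\mathrm{nnz},
\]
which is the same identity used at the end of the proof of Proposition~\ref{prop:d3-kernel}. This means Lemma~\ref{lem:primitive-cost} can be read directly as saying that the gradient, the Hessian--vector product and the directional third-order contraction $D^3f[u,v,\cdot]$ each cost $O(\mathrm{nnz})$. This is also what the per-monomial arguments of Propositions~\ref{prop:Hv-kernel} and~\ref{prop:d3-kernel} establish: the $\ell$-th monomial contributes $O(|\supp(\alpha^{(\ell)})|)$ work.

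Next I will go through the steps of Algorithm~\ref{alg:mf-logdet} with the stochastic routine of Algorithm~\ref{alg:mf-logdet-hutch}, following the structure of the proof of Theorem~\ref{thm:final-complexity}, but keeping $k$ as a free Krylov budget rather than substituting $\sqrt\kappa\log(1/\varepsilon)$. The costs are as follows.
\begin{itemize}
\item Step~1 (the gradient) costs $O(\mathrm{nnz})$.
\item Step~3 needs only the Hessian--vector product $H\nu$ to form $h$, which costs $O(\mathrm{nnz})$. The operator $H_T$ is applied matrix-free through \eqref{eq:HT-operator} and is not formed explicitly.
\item Step~4 costs $O(k\,\mathrm{nnz})$ per probe by Proposition~\ref{prop:probe-cost}, and $O(qk\,\mathrm{nnz})$ over all probes by Proposition~\ref{prop:logdet-cost}.
\item Step~5 consists of $k$ Krylov iterations of $O(\mathrm{nnz})$ each.
\item Step~6 costs $O(d)$.
\end{itemize}
Adding these gives $O((qk+k+2)\,\mathrm{nnz}+\text{lower-order terms})$, which is $O(qk\,\mathrm{nnz})$ because $q,k\ge1$.

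The main obstacle is the $O(d)$-type overhead from lifting $v\mapsto Tv$ and projecting $T^\top(\cdot)$, together with the vector operations inside CG. This overhead is not obviously bounded by $\mathrm{nnz}$; a dense $T$ would even cost $O(d^2)$ per application. I will handle it in two parts.

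\begin{itemize}
\item \emph{Representing $T$ implicitly.} I will take $T$ from a Householder reflector $Q=I-2ww^\top/\|w\|^2$ that maps $e_d$ to $\nu$. Then $T$ is the first $d-1$ columns of $Q$, and both $Tv$ and $T^\top y$ cost $O(d)$.
\item \emph{Absorbing the $O(d)$ terms.} I will use the same convention as in the proof of Proposition~\ref{prop:probe-cost}, which neglects these terms in the regime of interest. I will justify this by assuming $\mathrm{nnz}\gtrsim d$, which holds whenever every variable appears in at least one monomial, so that $d\le\mathrm{nnz}$. Any variable that appears in no monomial has identically zero derivatives and can be removed beforehand.
\end{itemize}

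With these two points, every $O(d)$ term is $O(\mathrm{nnz})$, and the bound $\mathrm{cost}=O(qk\cdot\mathrm{nnz})$ follows.
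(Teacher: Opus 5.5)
Your proof is correct and follows essentially the paper's argument. The paper's proof only says that each probe costs one Hessian--vector product per Krylov step plus one directional third-order contraction, each $O(\mathrm{nnz})$ by Propositions~\ref{prop:Hv-kernel} and~\ref{prop:d3-kernel}, and then sums over $q$ probes and $k$ Krylov steps. In doing so it tacitly inherits from Proposition~\ref{prop:probe-cost} the convention that the $O(d)$ lifting, projection and vector-operation costs can be neglected. Your extra work makes that convention rigorous where the paper does not: you account for every step of Algorithm~\ref{alg:mf-logdet}, you represent $T$ implicitly by a Householder reflector (a dense $T$ would cost $O(d^2)$ per lift), and you prove $d\le\mathrm{nnz}$.

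One side justification needs a fix. Deleting a variable that appears in no monomial is not a harmless preprocessing step for the same computation, because it changes $d$ and hence the coefficient $\|g\|/(d+1)$ in the output direction. You do not need it anyway. If $x_i$ appears in no monomial, then $e_i\perp\nabla f$ and $\nabla^2 f\,e_i=0$, so the unit vector $T^\top e_i$ lies in the kernel of $H_T$. Hence, whenever $H_T$ is invertible (the hypothesis of Lemma~\ref{lem:logdet-sec2}, under which the affine normal is defined), every variable must appear and $d\le\mathrm{nnz}$ holds automatically. In the regularized degenerate case, by contrast, the $O(d)$ terms genuinely cannot be absorbed, and the honest bound is $O(qk(\mathrm{nnz}+d))$.
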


\begin{proof}
By Proposition~\ref{prop:probe-cost}, the cost of one stochastic probe is proportional to the cost of a Hessian--vector product together with one directional third-order contraction. By Proposition~\ref{prop:Hv-kernel} and Proposition~\ref{prop:d3-kernel}, each of these primitives can be evaluated in
\[
O\!\Big(\sum_{\ell=1}^m |\supp(\alpha^{(\ell)})|\Big)
=
O(\mathrm{nnz})
\]
operations. Summing over $q$ probes and $k$ Krylov steps yields
\[
\mathrm{cost}=O(qk\cdot \mathrm{nnz}),
\]
as claimed.
\end{proof}

This shows that the computational complexity is governed by the algebraic sparsity
of the objective, rather than the ambient dimension itself.

\subsection{Discussion}

We summarize the main conclusions:

\begin{itemize}
\item[(i)] All third-order terms are eliminated through log-determinant reduction.
\item[(ii)] The algorithm is fully matrix-free.
\item[(iii)] Polynomial structure enables linear scaling.
\end{itemize}

These results resolve the main computational barrier
to applying affine-normal-based methods in high-dimensional polynomial optimization.

\section{Numerical Experiments}
\label{sec:experiments}

In this section we evaluate the proposed matrix-free log-determinant formulation (MF-LogDet) for affine normal computation. Our goal is not to benchmark the full optimization performance of YAND, but rather to directly validate the main computational claims of this paper.

More precisely, the experiments are organized around four questions:
\begin{itemize}
\item[(i)] Does MF-LogDet reproduce the same affine normal direction as the original explicit autodifferentiation (AD)-based computation?
\item[(ii)] How does stochastic trace approximation affect the accuracy and cost of the computed affine normal direction?
\item[(iii)] Does the runtime of MF-LogDet scale nearly linearly with the ambient dimension in the sparse polynomial regime?
\item[(iv)] Does the runtime of MF-LogDet scale nearly linearly with the structural sparsity level of the polynomial representation?
\end{itemize}

Accordingly, Section~\ref{subsec:mf-logdet-accuracy-speed} validates the exactness of the log-determinant reformulation, Section~\ref{subsec:stochastic-trace-accuracy} studies the stochastic accuracy--cost tradeoff, Section~\ref{subsec:mf-logdet-near-linear} examines scaling with respect to dimension, and Section~\ref{subsec:sparsity-scaling} examines scaling with respect to sparsity.

Unless otherwise stated, all experiments below measure the cost of computing a \emph{single affine normal direction}, rather than the cost of the outer optimization loop. This isolates the computational effect of the proposed reformulation from outer-iteration effects such as line search or step acceptance.

Throughout this section, we compare the following two implementations:
\begin{itemize}
\item \textbf{AD}: the original explicit autodifferentiation-based affine normal computation using the direct third-order formula (see \cite{AND2026});
\item \textbf{MF-LogDet}: the proposed matrix-free log-determinant reformulation developed in this paper.
\end{itemize}

To compare directional accuracy, we use the normalized direction error
\[
\bigl\|\widehat d_{\mathrm{MF\mbox{-}LogDet}}-\widehat d_{\mathrm{AD}}\bigr\|,
\qquad
\widehat d:=\frac{d}{\|d\|},
\]
together with the corresponding angular error.
To compare computational efficiency, we report both the absolute runtime and the speedup ratio
\[
\mathrm{speedup}(d)
=
\frac{T_{\mathrm{AD}}(d)}{T_{\mathrm{MF\mbox{-}LogDet}}(d)}.
\]
Values larger than $1$ indicate that MF-LogDet is faster than AD.
All reported runtimes are averages over repeated affine-normal evaluations.

\subsection{Accuracy and acceleration of affine normal computation}
\label{subsec:mf-logdet-accuracy-speed}

We first compare AD and MF-LogDet on a structured sparse quartic polynomial family in dimensions
\[
d=3,4,\dots,20.
\]
The purpose of this experiment is to compare the two affine normal formulas as directly as possible,
without stochastic trace approximation or iterative solver effects.

For each dimension $d$, we consider a deterministic sparse quartic polynomial of the form
\[
f(x)
=
\sum_{i=1}^d a_i x_i^4
+
\sum_{i=1}^{d-1} b_i x_i^2x_{i+1}^2
+
\sum_{i=1}^{d-2} c_i x_i^3x_{i+2}
+
\sum_{k=1}^{\lfloor d/3\rfloor} \gamma_k x_{3k-2}^2 x_{3k-1} x_{3k},
\]
with fixed deterministic coefficients $\{a_i,b_i,c_i,\gamma_k\}$.
This defines a structured sparse quartic test family in which both AD and MF-LogDet can be evaluated reliably across dimensions.
Moreover, the sparsity pattern remains controlled, and the average monomial support size stays bounded.

This construction yields a nontrivial but controlled test family in which both AD and MF-LogDet can be evaluated reliably across dimensions.
For each dimension, affine normal directions are computed at several deterministic sample points generated from smooth sinusoidal patterns.
In this experiment, MF-LogDet is run in an exact/direct mode, so that the comparison reflects only the difference between the explicit third-order formula and the proposed log-determinant reformulation itself.

\paragraph{Accuracy.}
Figure~\ref{fig:mf-logdet-error} shows that MF-LogDet reproduces the affine normal direction computed by AD to near machine precision across the entire tested range.
The normalized direction error remains around $10^{-9}$,
and the angular error stays on the order of $10^{-6}$ degrees or below.
This is a direct numerical validation of the central identity
\[
f^{pq}f_{pqi}=\partial_i\log\det(H_T),
\]
which underlies the proposed reformulation.
Numerically, the experiment shows that replacing the explicit third-order contraction by the log-determinant gradient does not degrade the computed affine normal direction.

\paragraph{Speedup and runtime crossover.}
Figure~\ref{fig:mf-logdet-speedup} reports the speedup ratio
$T_{\mathrm{AD}}/T_{\mathrm{MF\mbox{-}LogDet}}$ as the dimension increases.
In very low dimensions, the explicit AD-based computation is competitive, and can even be faster, since the derivative tensors remain small and inexpensive to evaluate.
However, as $d$ increases, the computational cost of AD grows rapidly, whereas MF-LogDet scales much more favorably.
In our experiment, the crossover occurs around $d\approx 11$, and by $d=20$ the speedup reaches approximately $11.9\times$.

Figure~\ref{fig:mf-logdet-absolute-time} complements this comparison by showing the absolute runtime per affine normal evaluation for both methods.
This figure makes clear that the advantage of MF-LogDet is not merely a constant-factor artifact:
the two methods exhibit genuinely different scaling trends as the dimension increases.

\paragraph{Interpretation.}
Taken together, Figures~\ref{fig:mf-logdet-error}--\ref{fig:mf-logdet-absolute-time}
support the first main computational claim of this paper:
MF-LogDet does not merely provide a cheaper approximation of affine normal directions,
but reproduces essentially the same direction while significantly reducing the computational burden once the dimension becomes moderately large.
This reflects the structural transition from explicit tensor-based evaluation to matrix-free operator-based computation.

\begin{figure}[t]
    \centering
    \includegraphics[width=0.82\textwidth]{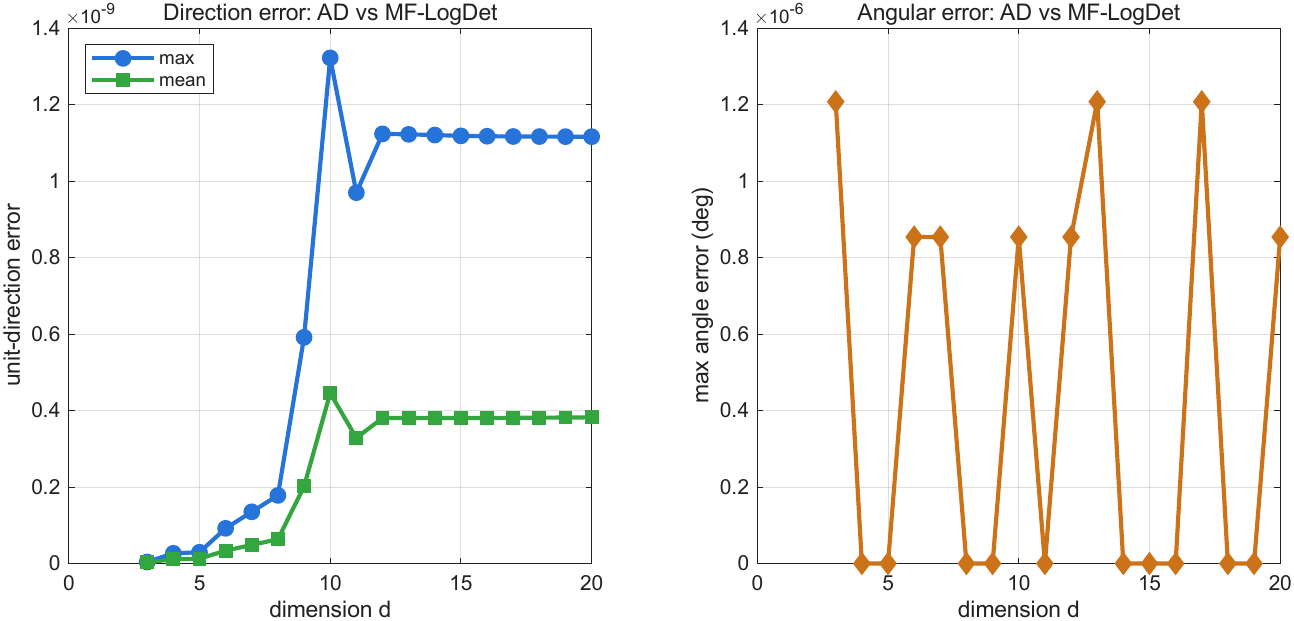}
    \caption{Accuracy of MF-LogDet relative to the original AD-based affine normal computation.
    The normalized direction error remains around $10^{-9}$, while the angular error stays near machine precision,
    showing that the proposed reformulation reproduces the same affine normal direction very accurately.}
    \label{fig:mf-logdet-error}
\end{figure}

\begin{figure}[t]
    \centering
    \includegraphics[width=0.72\textwidth]{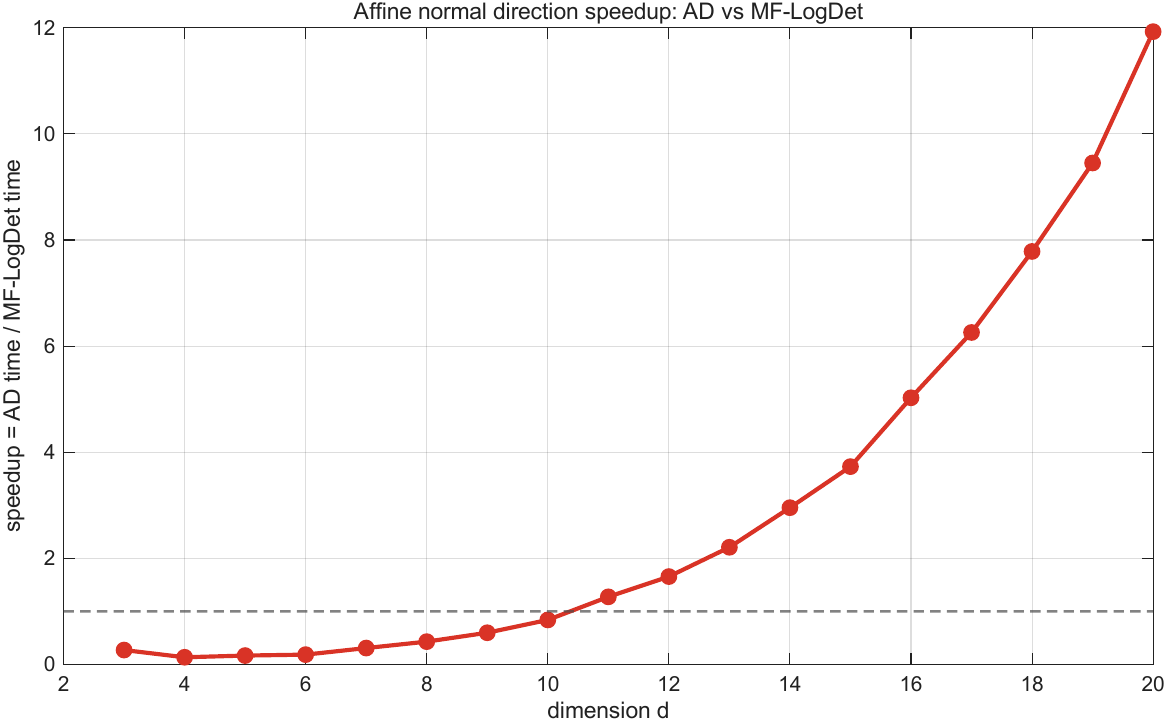}
    \caption{Speedup of affine normal computation across dimensions, defined by
    $T_{\mathrm{AD}}/T_{\mathrm{MF\mbox{-}LogDet}}$.
    Values above $1$ indicate that MF-LogDet is faster than AD.
    The crossover occurs around $d\approx 11$, and by $d=20$ the speedup reaches about $11.9\times$.}
    \label{fig:mf-logdet-speedup}
\end{figure}

\begin{figure}[t]
    \centering
    \includegraphics[width=0.72\textwidth]{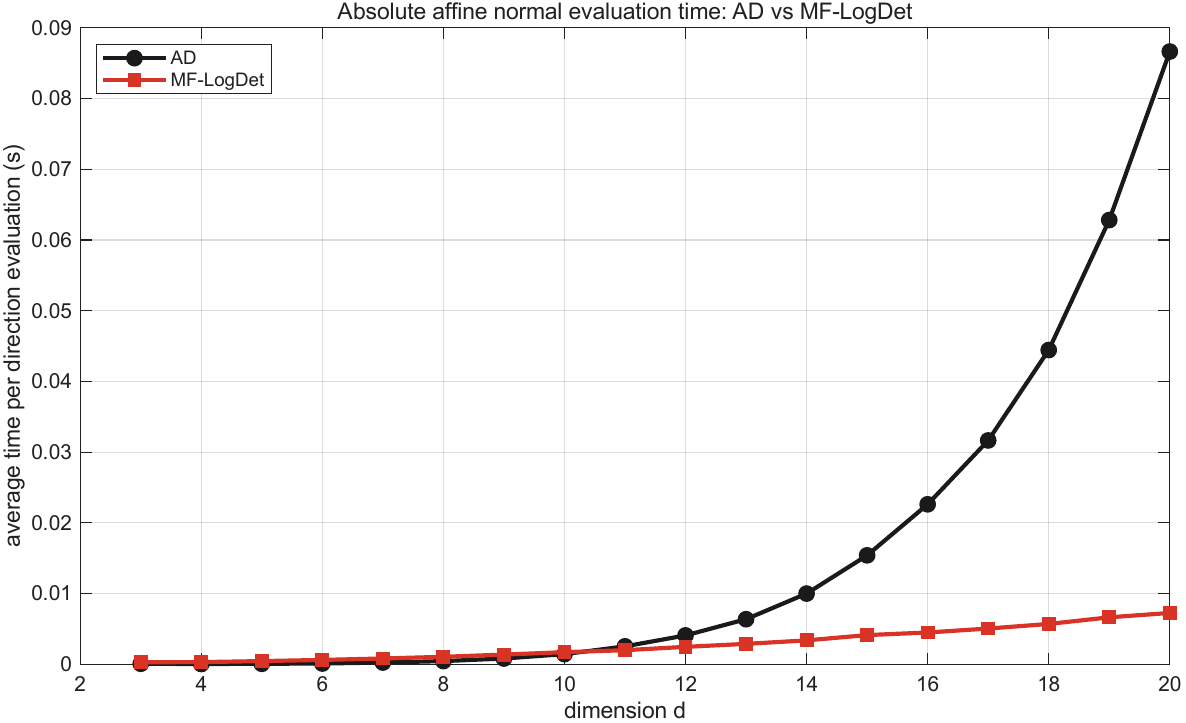}
    \caption{Average runtime per affine normal evaluation for AD and MF-LogDet as a function of the dimension $d$.
    While AD is competitive in very low dimensions, its runtime grows much more rapidly.
    MF-LogDet becomes more efficient once the dimension is moderately large.}
    \label{fig:mf-logdet-absolute-time}
\end{figure}

\subsection{Effect of stochastic trace approximation on affine normal accuracy}
\label{subsec:stochastic-trace-accuracy}

The previous experiment validated the log-determinant reformulation itself by comparing MF-LogDet with AD in an exact/direct setting.
We now examine a complementary question:
when the tangent log-determinant gradient
\[
a=\nabla_t \log\det(H_T)
\]
is approximated by stochastic trace estimation, how accurately is the resulting affine normal direction preserved, and how does this approximation affect the computational cost?

\paragraph{Experimental setup.}
We consider a structured sparse quartic polynomial family in dimensions
\[
d\in\{10,20,40\}.
\]
For each dimension, affine normal directions are evaluated at several deterministic sample points generated from smooth sinusoidal patterns.
At each point, we first compute the exact tangent log-determinant gradient and the corresponding affine normal direction by direct linear algebra in the tangent space.
We then replace the exact trace computation by the Hutchinson estimator with probe counts
\[
q\in\{2,5,10,20,50,100\},
\]
while keeping the remaining direction assembly exact.
Thus this experiment isolates the effect of stochastic trace approximation alone: the only inexact component is the Hutchinson-based approximation of the tangent log-determinant gradient, while the remaining tangent-space linear algebra is performed deterministically.
All runtimes are measured after a short warm-up phase and averaged over repeated evaluations in order to reduce timing noise.
In other words, the reference direction in this experiment is the exact/direct MF-LogDet computation, not the original AD formula, so that the effect of stochastic trace approximation can be isolated cleanly.

\paragraph{Measured quantities.}
We report:
\begin{itemize}
\item the relative error in the tangent log-determinant gradient,
\[
\frac{\|a_q-a_{\mathrm{exact}}\|}{\|a_{\mathrm{exact}}\|},
\]
\item the normalized affine normal direction error,
\[
\bigl\|\widehat d_q-\widehat d_{\mathrm{exact}}\bigr\|,
\qquad
\widehat d=\frac{d}{\|d\|},
\]
\item the corresponding angular error,
\item the runtime of the stochastic trace approximation,
\item and the runtime ratio between stochastic and exact tangent log-determinant evaluation.
\end{itemize}

\paragraph{Accuracy.}
Figure~\ref{fig:stochastic-trace-accuracy} shows that the affine normal direction becomes progressively more accurate as the number of Hutchinson probes increases.
For instance, in dimension $d=10$, the mean normalized direction error decreases from approximately $2.17\times 10^{-2}$ at $q=2$ to about $6.70\times 10^{-3}$ at $q=50$.
In dimension $d=20$, using $q=50$ probes reduces the mean normalized direction error to approximately $3.12\times 10^{-3}$, with a mean angular error of about $0.18^\circ$.
In dimension $d=40$, using $q=100$ probes yields a mean normalized direction error of approximately $1.42\times 10^{-3}$ and a mean angular error of about $0.08^\circ$.
Thus, although very small probe counts can introduce noticeable fluctuations, moderate probe counts already recover the affine normal direction with good accuracy.

\paragraph{Runtime and comparison with exact trace evaluation.}
Figure~\ref{fig:stochastic-trace-runtime} shows that the runtime of the stochastic approximation grows approximately linearly with $q$, as expected from the trace-estimation model.
For example, in dimension $d=10$, the average runtime increases from about $1.47\times 10^{-4}$ seconds at $q=2$ to about $5.99\times 10^{-3}$ seconds at $q=100$.

It is also important to compare this cost with the exact tangent log-determinant computation used in the first experiment.
Figure~\ref{fig:stochastic-trace-runtime-vs-exact} shows that the advantage of stochastic trace approximation depends on both the dimension and the probe count.
At $d=10$, the Hutchinson estimator is cheaper than the exact computation only for very small probe counts, and becomes more expensive once $q$ is moderately large.
At $d=20$, the crossover occurs around $q\approx 20$.
At $d=40$, however, the stochastic approximation remains substantially cheaper than the exact baseline for small and moderate probe counts, and only becomes more expensive when $q$ is large.
This confirms the intended role of stochastic trace estimation: it is most beneficial in larger-scale regimes where exact tangent log-determinant evaluation is itself costly.

\paragraph{Representative tradeoff.}
Table~\ref{tab:stochastic-trace-accuracy} provides representative numerical values for the resulting accuracy--cost tradeoff.
In all tested dimensions, increasing the probe count $q$ improves the affine normal direction accuracy, while the runtime grows approximately proportionally to $q$.
At the same time, the relative cost with respect to exact tangent log-determinant evaluation depends strongly on the dimension.
For $d=10$, the stochastic approximation is cheaper only for very small probe counts and becomes more expensive once $q$ is moderately large.
For $d=20$, the crossover occurs around $q\approx 20$.
For $d=40$, the stochastic approximation remains cheaper than the exact baseline even at $q=20$, and only becomes more expensive when $q$ is large.
The table therefore complements Figures~\ref{fig:stochastic-trace-accuracy}--\ref{fig:stochastic-trace-runtime-vs-exact} by making the practical accuracy--cost tradeoff explicit.

\paragraph{Interpretation.}
This experiment fills the gap between the exact/direct validation of MF-LogDet and the large-scale complexity experiments.
It shows that the stochastic trace approximation is not merely asymptotically appealing, but can recover the affine normal direction to useful accuracy with moderate probe counts.
In particular, the results support the practical claim that small-to-moderate values of $q$ already provide a reasonable balance between accuracy and efficiency, while the computational benefit of stochastic trace estimation becomes more pronounced as the dimension increases.

\begin{figure}[t]
    \centering
    \includegraphics[width=0.9\textwidth]{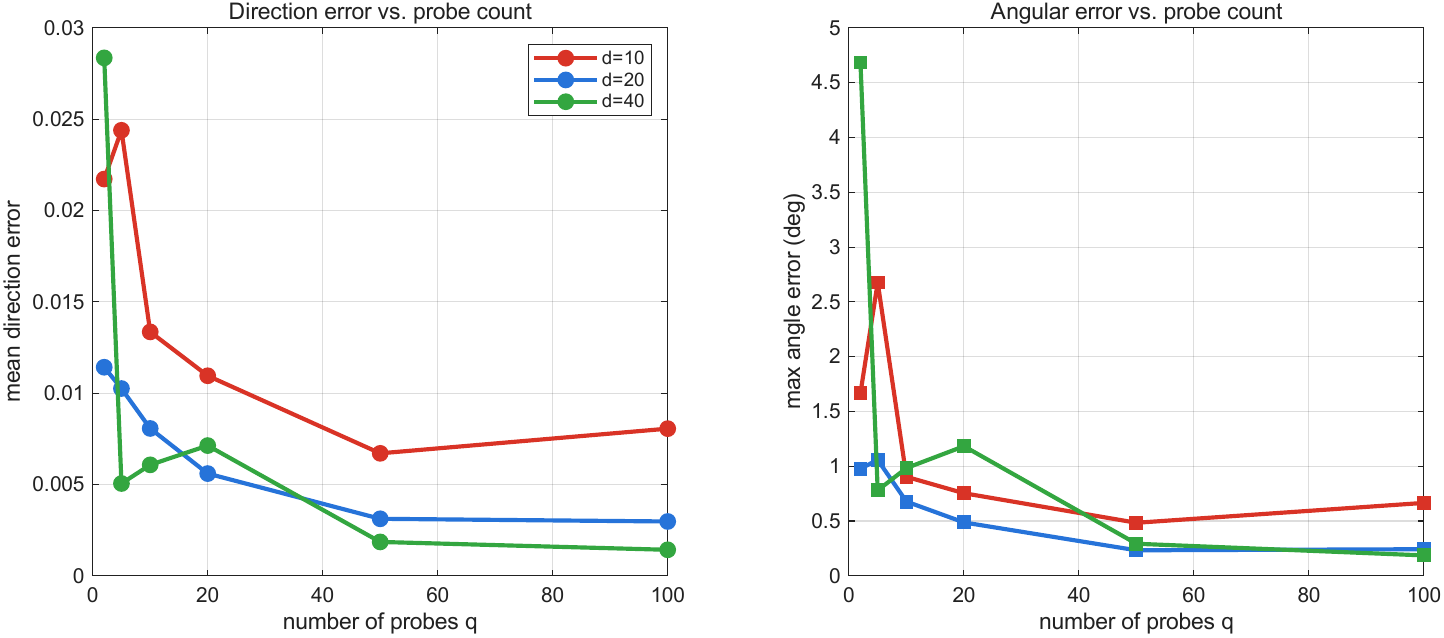}
    \caption{Effect of the Hutchinson probe count $q$ on affine normal accuracy.
    The left panel shows the mean normalized direction error, and the right panel shows the maximum angular error.
    In all tested dimensions, increasing $q$ improves the accuracy of the recovered affine normal direction.}
    \label{fig:stochastic-trace-accuracy}
\end{figure}

\begin{figure}[t]
    \centering
    \includegraphics[width=0.72\textwidth]{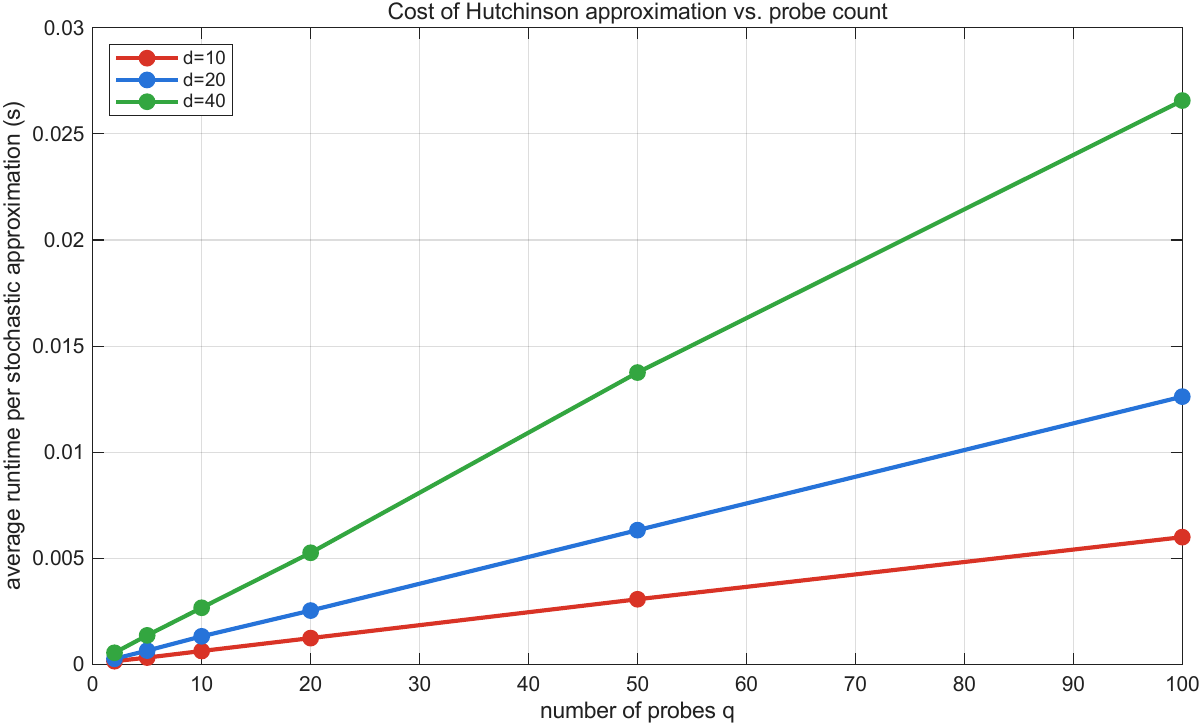}
    \caption{Runtime of the stochastic trace approximation as a function of the number of Hutchinson probes $q$.
    After warm-up and repeated averaging, the cost grows approximately linearly with $q$, consistent with the theoretical complexity model.}
    \label{fig:stochastic-trace-runtime}
\end{figure}

\begin{figure}[t]
    \centering
    \includegraphics[width=0.9\textwidth]{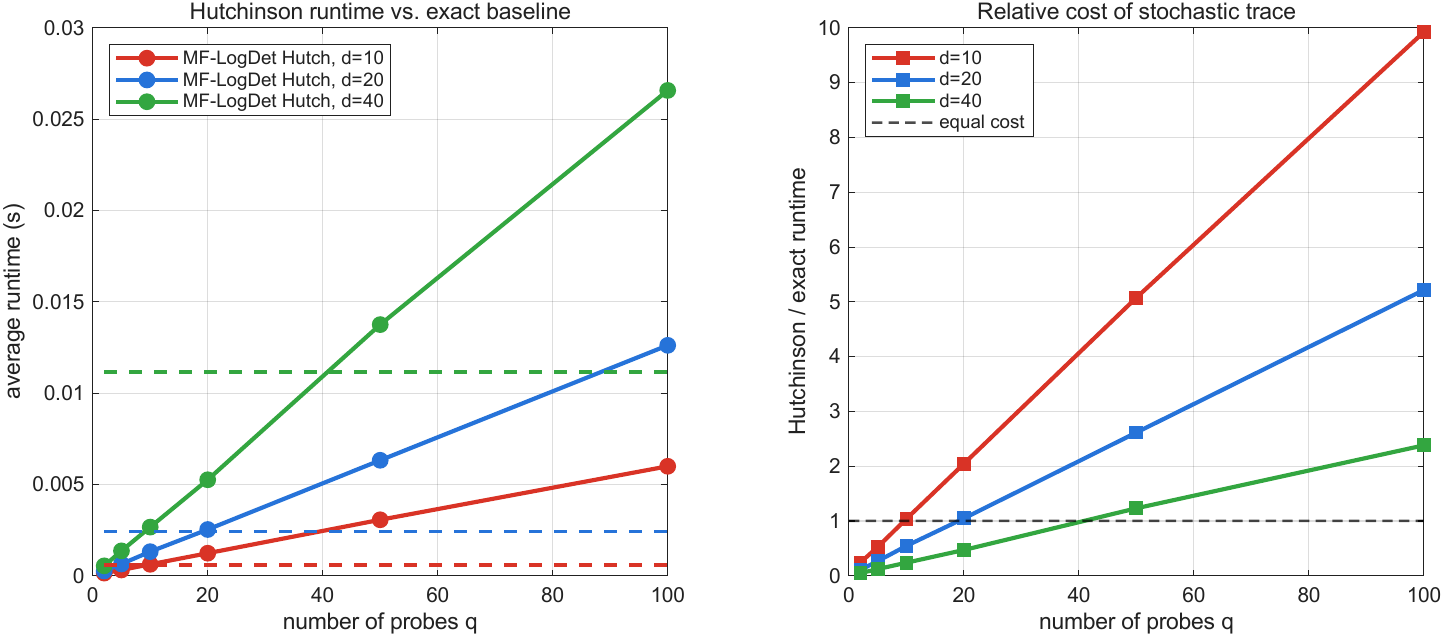}
    \caption{Runtime comparison between stochastic Hutchinson trace approximation and exact tangent log-determinant evaluation.
    The left panel shows absolute runtime, where dashed horizontal lines denote the exact baseline for each dimension.
    The right panel shows the runtime ratio between Hutchinson and exact evaluation.
    Small probe counts are substantially cheaper than exact evaluation, especially in higher dimensions, whereas large probe counts eventually remove this advantage.}
    \label{fig:stochastic-trace-runtime-vs-exact}
\end{figure}

\begin{table}[t]
\centering
\caption{Representative accuracy--cost tradeoff for stochastic trace approximation in MF-LogDet affine normal computation. The last column reports the runtime ratio between Hutchinson approximation and exact tangent log-determinant evaluation.}
\label{tab:stochastic-trace-accuracy}
\begin{tabular}{cccccc}
\toprule
$d$ & $q$ & Mean direction error & Mean angle error (deg) & Mean runtime (s) & Hutch / exact \\
\midrule
10 & 2   & $2.17\times 10^{-2}$ & $1.24$ & $1.47\times 10^{-4}$ & $0.24$ \\
10 & 20  & $1.10\times 10^{-2}$ & $0.63$ & $1.24\times 10^{-3}$ & $2.05$ \\
10 & 100 & $8.06\times 10^{-3}$ & $0.46$ & $5.99\times 10^{-3}$ & $9.93$ \\
20 & 2   & $1.14\times 10^{-2}$ & $0.65$ & $2.61\times 10^{-4}$ & $0.11$ \\
20 & 20  & $5.60\times 10^{-3}$ & $0.32$ & $2.53\times 10^{-3}$ & $1.05$ \\
20 & 100 & $2.97\times 10^{-3}$ & $0.17$ & $1.26\times 10^{-2}$ & $5.22$ \\
40 & 2   & $2.84\times 10^{-2}$ & $1.63$ & $5.44\times 10^{-4}$ & $0.05$ \\
40 & 20  & $7.13\times 10^{-3}$ & $0.41$ & $5.26\times 10^{-3}$ & $0.47$ \\
40 & 100 & $1.42\times 10^{-3}$ & $0.08$ & $2.66\times 10^{-2}$ & $2.38$ \\
\bottomrule
\end{tabular}
\end{table}

\subsection{Empirical near-linear complexity in dimension}
\label{subsec:mf-logdet-near-linear}

We next test the large-scale complexity prediction established in Section~\ref{sec:complexity}.
The theory predicts that, in the sparse polynomial regime, the cost of MF-LogDet affine normal computation should scale essentially linearly with the ambient dimension,
provided that the number of monomials grows linearly and the average support size remains bounded.

To match this regime, we consider dimensions
\[
d\in\{50,100,200,400,800\},
\]
and generate sparse polynomial objectives of the form
\[
f(x)=\sum_{\ell=1}^{m}c_\ell x^{\alpha^{(\ell)}},
\qquad x\in\mathbb{R}^d,
\]
with the following structure:
\begin{itemize}
\item the number of monomials is set to $m=10d$;
\item the average support size remains bounded (approximately $3.5$ in our experiment);
\item a small quartic stabilization term is included to ensure numerical robustness.
\end{itemize}
For MF-LogDet, we fix the Hutchinson probe count at $q=2$, use PCG with a maximum of $k=5$ iterations, and set the regularization parameter to $\lambda=10^{-6}$.
Thus the experiment is designed to match the structural regime under which the theory predicts near-linear complexity.

For each dimension, we measure the average runtime of one MF-LogDet affine normal evaluation and fit the empirical scaling law
\[
T(d)\approx C d^\beta
\]
by linear regression in log-log scale.

\paragraph{Runtime scaling.}
Figure~\ref{fig:mf-logdet-near-linear} shows the runtime behavior in both the original and log-log scales.
The fitted exponent is
\[
\beta=1.0107,
\]
which is extremely close to the ideal linear value $\beta=1$.
This provides strong empirical support for the near-linear complexity predicted by the theory.

\paragraph{Operator-level stability.}
Figure~\ref{fig:mf-logdet-near-linear-counts} and Table~\ref{tab:mf-logdet-near-linear-full}
show that the internal operator counts remain essentially constant across dimensions:
the number of Hessian--vector products per affine normal evaluation stays around $329$--$330$,
the number of third-directional evaluations is fixed at $2$,
and the Krylov workload remains effectively constant at about $300$ inner iterations.
This is important because it shows that the observed runtime growth is driven mainly by the increase of problem size itself,
rather than by deterioration of the inner numerical procedure.

\paragraph{Interpretation.}
These results provide strong empirical support for the dimension-scaling prediction of Section~\ref{sec:complexity}: under bounded support size and linearly growing monomial count, the runtime of MF-LogDet affine normal computation grows essentially linearly with the ambient dimension. The experiment therefore validates not only the observed empirical law, but also the structural mechanism behind Theorem~\ref{thm:final-complexity} and Corollary~\ref{cor:linear-complexity-O(d)}.

\begin{figure}[t]
    \centering
    \begin{subfigure}[t]{0.48\textwidth}
        \centering
        \includegraphics[width=\textwidth]{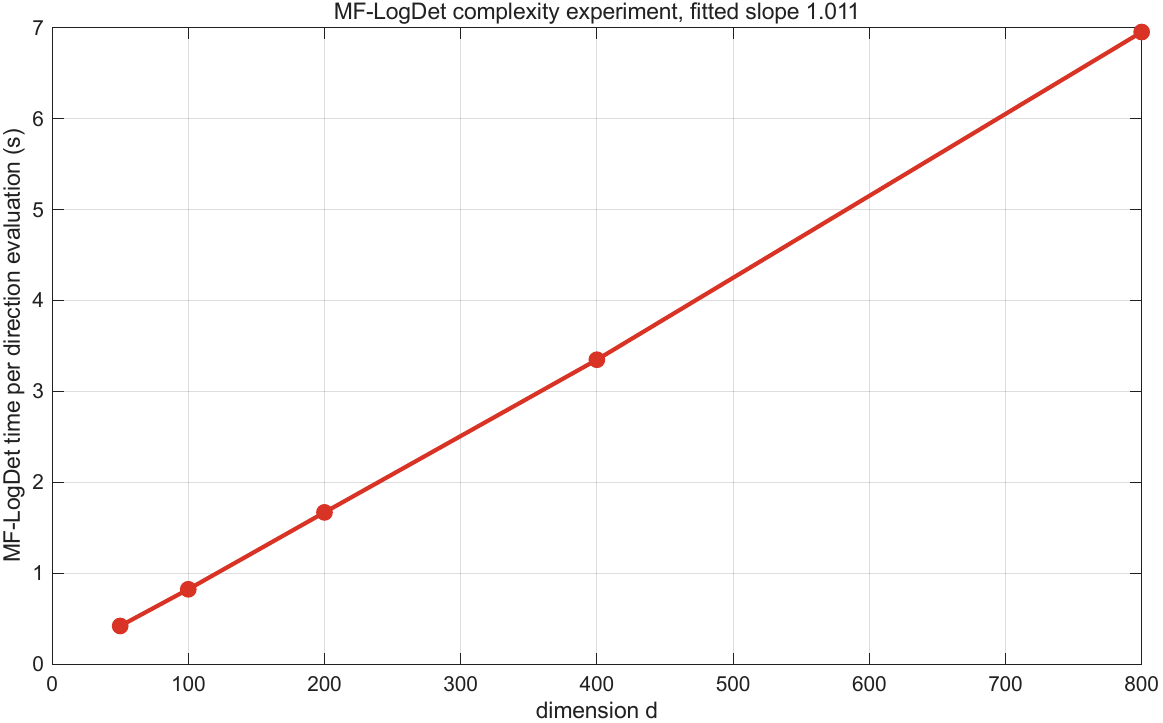}
        \caption{Average runtime per MF-LogDet affine normal evaluation as a function of the dimension $d$.}
        \label{fig:mf-logdet-near-linear-time}
    \end{subfigure}
    \hfill
    \begin{subfigure}[t]{0.48\textwidth}
        \centering
        \includegraphics[width=\textwidth]{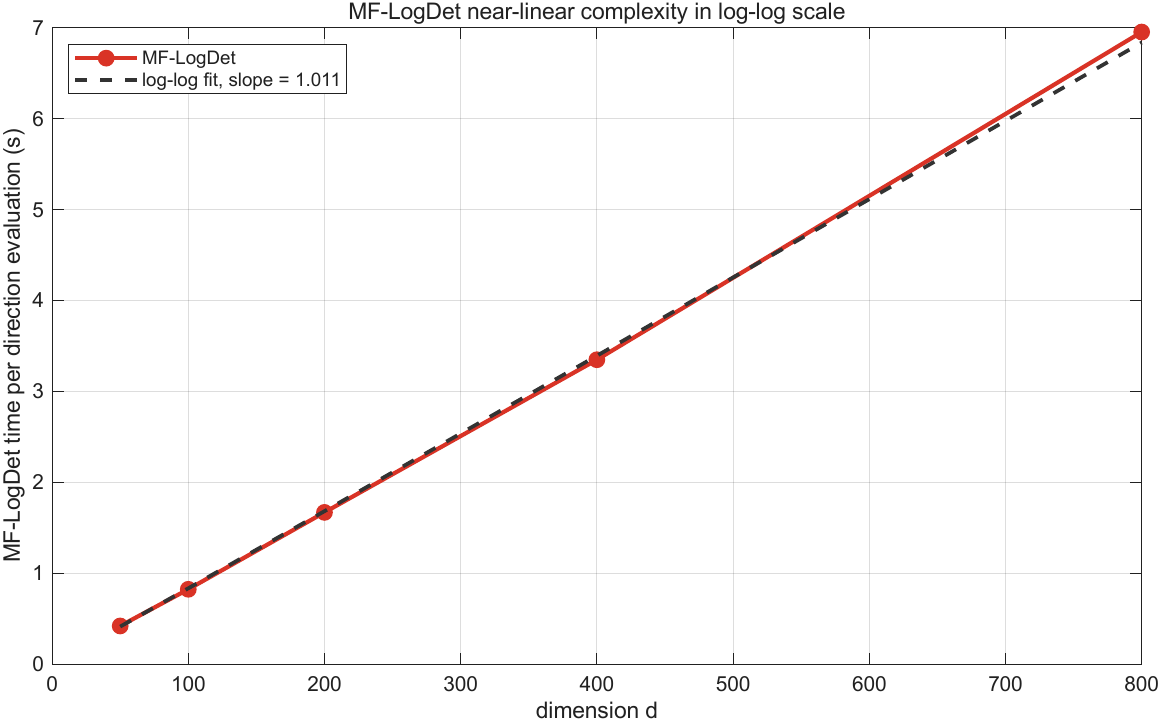}
        \caption{Log-log plot of the same runtime data. The fitted slope is $\beta=1.0107$.}
        \label{fig:mf-logdet-near-linear-loglog}
    \end{subfigure}
    \caption{Empirical near-linear complexity of MF-LogDet affine normal computation in the sparse polynomial regime. The left panel shows the runtime trend in the original scale, while the right panel confirms near-linear behavior through a log-log fit.}
    \label{fig:mf-logdet-near-linear}
\end{figure}

\begin{figure}[t]
    \centering
    \includegraphics[width=0.85\textwidth]{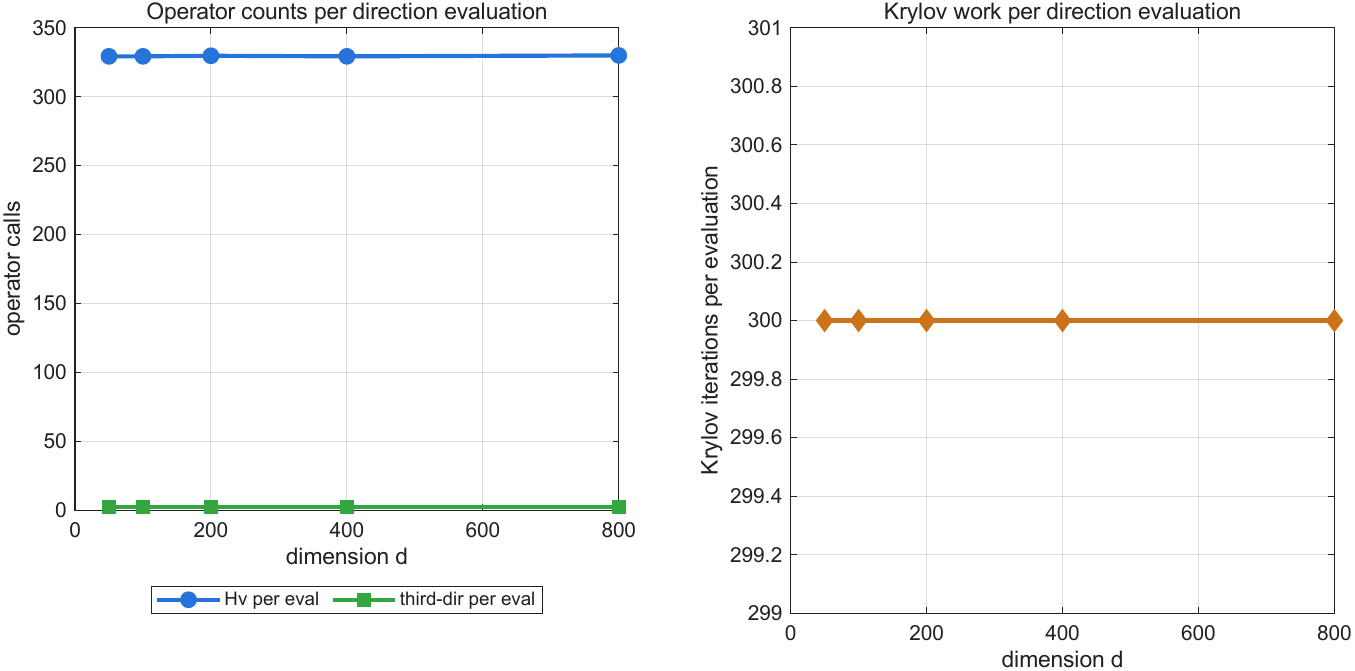}
    \caption{Operator counts per MF-LogDet affine normal evaluation across dimensions.
The Hessian--vector count, directional third-derivative count, and Krylov workload remain essentially constant, indicating that the observed runtime growth in Figure~\ref{fig:mf-logdet-near-linear} is driven primarily by problem size rather than by deterioration of the inner iterative procedures.}
    \label{fig:mf-logdet-near-linear-counts}
\end{figure}

\begin{table}[t]
\centering
\caption{Empirical near-linear complexity of MF-LogDet affine normal computation in the sparse polynomial regime.
The polynomial family satisfies $m=10d$ with bounded average support size.
The observed runtime follows a log-log slope of approximately $1.0107$, consistent with near-linear scaling.}
\label{tab:mf-logdet-near-linear-full}
\begin{tabular}{cccccccc}
\toprule
$d$ & $m$ & Avg. support & Time / eval (s) & Hv / eval & Third / eval & Krylov / eval & Log-log fit \\
\midrule
50  & 550  & 3.6291 & 0.4218 & 329.33 & 2 & 300 & 0.4148 \\
100 & 1100 & 3.4909 & 0.8253 & 329.33 & 2 & 300 & 0.8359 \\
200 & 2200 & 3.5682 & 1.6710 & 329.75 & 2 & 300 & 1.6843 \\
400 & 4400 & 3.5541 & 3.3499 & 329.33 & 2 & 300 & 3.3936 \\
800 & 8800 & 3.5578 & 6.9541 & 330.00 & 2 & 300 & 6.8379 \\
\bottomrule
\end{tabular}
\end{table}

\subsection{Complexity with respect to sparsity}
\label{subsec:sparsity-scaling}

Finally, we test the dependence of the MF-LogDet runtime on the sparsity scale$
ms,$
where $m$ is the number of monomials and $s$ is the average support size.
We fix the ambient dimension at
$
d=200
$
and vary the number of monomials
\[
m\in\{200,400,800,1200,2000,3000,4000\},
\]
while keeping the quartic support structure bounded.
In this way, the effective sparsity scale $ms$ varies substantially while the ambient dimension remains fixed.
As in the previous complexity experiment, we use $q=2$, PCG with maximum budget $k=5$, and regularization parameter $\lambda=10^{-6}$.

\paragraph{Results.}
Figure~\ref{fig:mf-logdet-sparsity} shows the measured runtime as a function of $ms$.
As $m$ increases from $200$ to $4000$, the effective sparsity scale $ms$ increases from $1000$ to $16200$, while the runtime per direction evaluation grows from $0.2181$ seconds to $3.0217$ seconds.
A log-log fit yields
\[
T \approx C (ms)^\beta,
\qquad
\beta = 0.9625.
\]
Since the fitted exponent is very close to $1$, the experiment provides strong numerical evidence that the runtime of MF-LogDet grows approximately linearly with $ms$ in the sparse quartic regime.

\paragraph{Interpretation.}
This experiment complements the previous dimension-scaling study. There, we observed near-linear growth with respect to $d$ under the regime $m=\Theta(d)$ and bounded support size. Here, by fixing $d$ and varying the sparsity level directly, we isolate the dependence on the structural quantity $ms$ itself. The resulting slope $\beta\approx 0.96$ is fully consistent with the complexity model of Section~\ref{sec:complexity}, which predicts that the matrix-free polynomial implementation is governed primarily by the algebraic sparsity of the objective rather than by the ambient dimension alone.

\begin{figure}[t]
    \centering
    \includegraphics[width=0.9\textwidth]{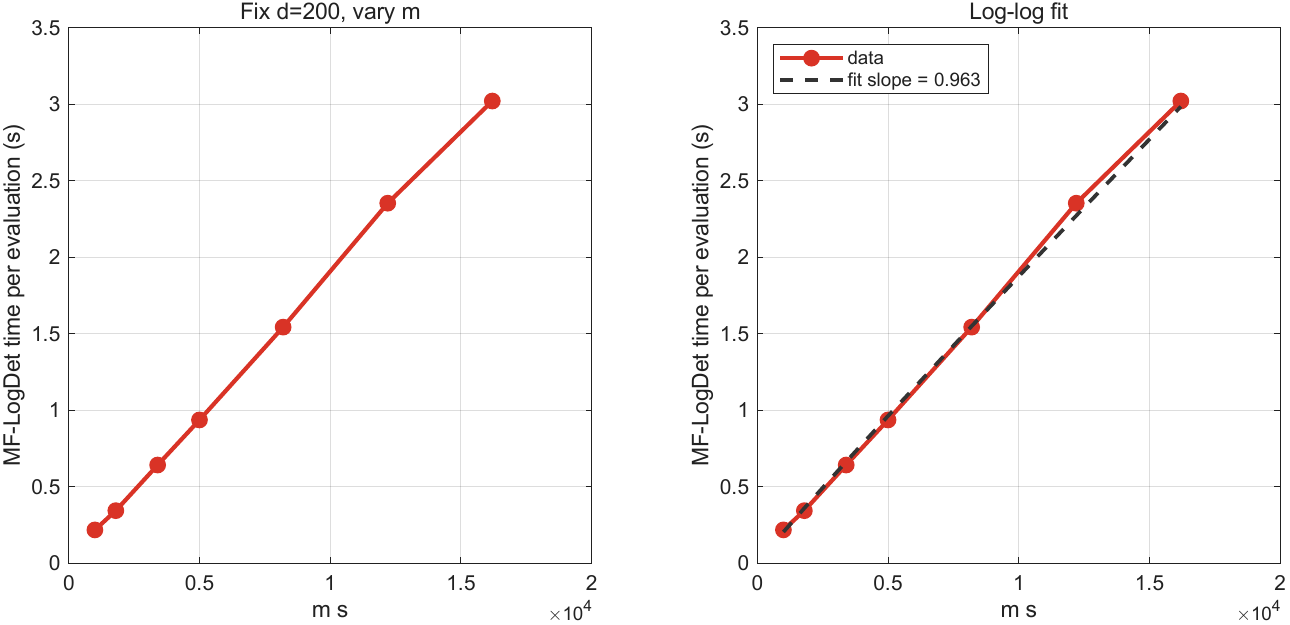}
    \caption{Runtime of one MF-LogDet affine normal evaluation versus the sparsity scale $ms$ for sparse quartic polynomials with fixed dimension $d=200$ and varying monomial count $m$.
The log-log fitted slope is approximately $0.9625$, providing empirical support for near-linear scaling with respect to the structural sparsity scale.}
    \label{fig:mf-logdet-sparsity}
\end{figure}

\begin{table}[t]
\centering
\caption{Empirical sparsity scaling of MF-LogDet at fixed dimension $d=200$. The runtime is measured for one affine normal direction evaluation.}
\label{tab:mf-logdet-sparsity}
\begin{tabular}{ccccc}
\toprule
$m$ & Avg. support & $ms$ & Time / eval (s) & Log-log fit \\
\midrule
200  & 2.5000 & 1000  & 0.2181 & 0.2046 \\
400  & 3.0000 & 1800  & 0.3440 & 0.3602 \\
800  & 3.4000 & 3400  & 0.6424 & 0.6644 \\
1200 & 3.5714 & 5000  & 0.9369 & 0.9630 \\
2000 & 3.7273 & 8200  & 1.5438 & 1.5503 \\
3000 & 3.8125 & 12200 & 2.3537 & 2.2724 \\
4000 & 3.8571 & 16200 & 3.0217 & 2.9856 \\
\bottomrule
\end{tabular}
\end{table}

\subsection{Discussion}

The numerical experiments support the main computational claims of this work.

\paragraph{(i) Exactness of the reformulation.}
In the exact/direct comparison with the original autodifferentiation-based computation, MF-LogDet reproduces the affine normal direction to near machine precision. This confirms that the log-determinant reformulation preserves the affine normal geometry while avoiding explicit third-order tensor contraction.

\paragraph{(ii) Accuracy--cost tradeoff of stochastic trace estimation.}
When the tangent log-determinant gradient is computed by stochastic trace estimation, the resulting affine normal direction remains accurate for moderate probe counts, while the computational cost grows approximately linearly with the number of probes. The runtime comparison with exact tangent log-determinant evaluation further shows that the stochastic approximation becomes increasingly attractive as the problem size grows.

\paragraph{(iii) Structural speedup and scalability.}
The observed acceleration is not merely a constant-factor improvement, but reflects a genuine transition from explicit tensor-based computation to matrix-free operator-based computation. In the sparse polynomial regime, the method exhibits near-linear scaling both with respect to the ambient dimension $d$ and with respect to the structural sparsity scale $ms$, in strong agreement with the theoretical complexity analysis.

\subsection{Summary}

Taken together, these experiments show that the proposed log-determinant reformulation provides an accurate and scalable route to affine normal computation. The exact version reproduces the original affine normal direction with essentially no loss of accuracy, while the stochastic matrix-free implementation offers a practical accuracy--efficiency tradeoff and becomes increasingly favorable as the problem dimension and sparsity grow. This demonstrates that the main computational bottleneck of explicit affine normal evaluation can be effectively removed, making affine normal computation viable in moderately large and high-dimensional sparse polynomial settings.

\section{Conclusion}
\label{sec:conclusion}

In this paper, we developed a scalable computational framework for affine normal directions in high-dimensional optimization, with particular emphasis on sparse polynomial objectives. The central contribution is a structural reduction showing that the key affine-normal contraction can be rewritten as the gradient of the log-determinant of the tangent Hessian. At the mathematical level, this reveals a hidden log-determinant geometry behind the classical affine normal formula. At the computational level, it replaces explicit third-order tensor contraction by a matrix-free formulation based on tangent Hessian operators, linear system solves, and stochastic trace estimation.

Building on this reformulation, we developed both exact and stochastic procedures for computing affine normal directions. In the polynomial setting, the algebraic closure of derivatives makes it possible to implement all required primitives through sparse monomial-based kernels. The resulting theory shows that the cost of affine normal computation is governed by matrix-free second-order primitives together with the stochastic and Krylov budgets, and is near-linear in the relevant sparsity scale. The numerical experiments confirm that the reformulation reproduces the affine normal direction computed by the explicit autodifferentiation-based formula to near machine precision, while delivering substantial runtime improvements in moderate and high dimensions and exhibiting near-linear empirical scaling in both dimension and sparsity.

More broadly, this work suggests that affine normal directions may be understood not only as classical third-order geometric objects, but also as curvature-corrected second-order search directions that incorporate local volume distortion through $\log\det(H_T)$. Several directions remain open for future study, including the design of effective preconditioners for tangent linear systems, multi-CPU and GPU implementations of the matrix-free kernels, and extensions beyond the polynomial setting to broader structured function classes such as trigonometric polynomials, symmetric polynomials, and low-rank structured models. An especially important next step is to integrate the present computational framework into large-scale implementations of Yau's affine normal descent. These directions appear promising for further connecting affine differential geometry with modern large-scale optimization.

\section*{Acknowledgements}
Y.-S.\ N.\ gratefully acknowledges support from the National Natural
Science Foundation of China (Grant No.\ 42450242) and the Beijing Overseas
High-Level Talent Program. A.\ S.\ would like to acknowledge support from the Beijing Natural Science
Foundation (Grant No.\ BJNSF--IS24005) and the NSFC--RFIS Program
(Grant No.\ W2432008).
He also thanks the NSF AI Institute for Artificial Intelligence and Fundamental
Interactions at the Massachusetts Institute of Technology (MIT), funded by the
U.S.\ National Science Foundation under Cooperative Agreement PHY--2019786,
as well as China's National Program of Overseas High-Level Talent for generous
support.
All three authors gratefully acknowledge institutional support from the Beijing
Institute of Mathematical Sciences and Applications (BIMSA).

\bibliographystyle{plain}
\bibliography{references}

\section*{Author Information}

\noindent
Yi-Shuai Niu$^{1}$, Artan Sheshmani$^{1,3}$, and Shing-Tung Yau$^{1,2}$

\medskip

\noindent
$^{1}$ Beijing Institute of Mathematical Sciences and Applications (BIMSA), Beijing 101408, China

\noindent
$^{2}$ Yau Mathematical Sciences Center, Tsinghua University, Beijing 100084, China

\noindent
$^{3}$ IAIFI Institute, Massachusetts Institute of Technology, Cambridge, MA 02139, USA

\medskip

\noindent
\textit{E-mail addresses:}\\
niuyishuai@bimsa.cn (Yi-Shuai Niu),\\
artan@mit.edu (Artan Sheshmani),\\
styau@tsinghua.edu.cn (Shing-Tung Yau)

\end{document}